\documentclass{article}

\usepackage{graphicx} 
\usepackage{amsfonts}
\usepackage{amssymb}
\usepackage{amsthm}
\usepackage{amsmath}
\usepackage{color}
\usepackage{url}

\usepackage{todonotes}

\newcommand{\Inv}[2]{{#2}^{(-#1)}}
\newcommand{\ELL}{\textnormal{ELL}}
\newcommand{\bfDelta}{\boldsymbol{\Delta}}
\DeclareMathOperator{\dom}{dom}
\DeclareMathOperator{\ran}{ran}

\renewcommand{\labelenumi}{(\arabic{enumi})}

\newcounter{casecounter}
\newtheorem{thm}{Theorem}[section]
\newtheorem{lem}[thm]{Lemma}
\newtheorem{prop}[thm]{Proposition}
\newtheorem{cor}[thm]{Corollary}
\newtheorem{case}[casecounter]{Case}
\theoremstyle{definition}

\newtheorem{defn}[thm]{Definition}

\newtheorem{question}[thm]{Question}

\newtheorem{remark}[thm]{Remark}
\newtheorem{fact}[thm]{Fact}

\begin{document}
	
	\title{Recovering a structure from its tree of tuples}
	
	\author{David Gonzalez, Matthew Harrison-Trainor\thanks{Harrison-Trainor was partially supported by a Sloan Research Fellowship and by the National Science Foundation under Grant DMS-2452105.}, and Julia Knight}
	
	\maketitle
	
	\begin{abstract}
		We show that for each countable ordinal $\alpha$, there is a $\bfDelta^0_\alpha$ operator that, for all $\mathcal{A}$ of Scott rank at most $\alpha$, takes copies of $T_\mathcal{A}$ to copies of $\mathcal{A}$, where $T_\mathcal{A}$ is the tree of tuples corresponding to $\mathcal{A}$. For finite $\alpha$, we show that this is best possible. The proof that there is a $\bfDelta^0_\alpha$ operator uses Harrington's method of workers. The sharpness result, for finite $n$, uses recent work of Harrison-Trainor on ``unfriendly jump inversion.''
	\end{abstract}

	\subsubsection*{AI Disclosure}
	
	{\small ChatGPT Sol 5.6 was used by the first and second author during the editing process and to search for mathematical and linguistic errors.}
	
	\section{Introduction}
	
	In their seminal paper \cite{FriedmanStanley}, Friedman and Stanley showed that the equivalence relation of isomorphism on trees is as complicated as the equivalence relation of isomorphism for structures in any finite relational language $\mathcal{L}$ by giving a Borel embedding that takes each countable $\mathcal{L}$-structure $\mathcal{A} \in Mod(\mathcal{L})$ to a tree $T_{\mathcal{A}}$ such that
	\[ \mathcal{A} \cong \mathcal{B} \Longleftrightarrow T_{\mathcal{A}} \cong T_{\mathcal{B}}.\] In fact, the embedding is Turing computable. 
	In \cite{HTMontalban}, $T_{\mathcal{A}}$, called the \textit{tree of tuples} of $\mathcal{A}$, is described as a labeled tree whose nodes are the finite tuples of $\mathcal{A}$, ordered by extension (tuples of length $n$ are at level $n$), with each tuple labeled by its atomic diagram. This construction has been used as a step in the proof of several further Borel complexity results, including the analysis of structures as diverse as linear orderings \cite{FriedmanStanley}, Boolean algebras \cite{CamerloGao}, and models of Peano arithmetic \cite{MR,GLRS}. 
	
	The tree of tuples construction does not have the regularity and preservation properties enjoyed by many of the commonly studied Borel embeddings. For example, by results of Gao \cite{Gao} (see also \cite{GonzalezRossegger} and \cite{HTKim}), the class $\mathbb{T}$ of labeled trees isomorphic to those in the range of the embedding is not Borel. In \cite{GonzalezKnight}, Gonzalez and Knight considered subclasses $\mathbb{T}^\alpha$ consisting of those $T\cong T_\mathcal{A}$ for $\mathcal{A}$ of Scott rank at most $\alpha$. Their result, contrasting with that of Gao, says that if $\alpha$ is an $X$-computable ordinal, then the class $\mathbb{T}^\alpha$ is $X$-effectively $\Pi^0_{2\alpha+2}$-complete. In \cite{robuster}, Montalb\'{a}n defined the \emph{Scott rank} of $\mathcal{A}$ to be the least ordinal $\alpha$ such that $\mathcal{A}$ has a $\Pi_{\alpha+1}$ Scott sentence, or equivalently, such that the orbits of all tuples in $\mathcal{A}$ are defined by $\Sigma_\alpha$-formulas (Montalb\'{a}n gave several further equivalent conditions).  This is the definition of Scott rank used in \cite{GonzalezKnight}. 
	
	In the present paper, we consider the following question: Given a labeled tree $T$ that is isomorphic to $T_{\mathcal{A}}$ for some $\mathcal{L}$-structure $\mathcal{A}$, how hard is it to compute a copy of $\mathcal{A}$? Harrison-Trainor and Montalb\'an showed in \cite{HTMontalban} that there is a structure $\mathcal{A}$ such that $T_\mathcal{A}$ has a computable copy, but $\mathcal{A}$ has no computable copy. Moreover, Harrison-Trainor and Montalb\'an showed\footnote{The published proof in 
		\cite{HTMontalban} is incorrect. See the erratum \cite{errata}.} that for each computable ordinal $\alpha\geq 2$, there is a structure $\mathcal{A}$ such that $T_\mathcal{A}$ has a computable copy, but $\mathcal{A}$ has no $\Delta^0_\alpha$ copy. Relativizing this to any oracle $X$, we see that there is no $\bfDelta^0_\alpha$ operator, for any ordinal $\alpha\geq 2$, which can uniformly recover structures from their tree of tuples.\footnote{By a $\bfDelta^0_\alpha$ operator we mean a $\bfDelta^0_\alpha$-measurable function between spaces of structures, and by an effective $\Delta^0_\alpha$ operator we mean a $\bfDelta^0_\alpha$ operator with a computable code.}
	
	The work of Gonzalez and Knight \cite{GonzalezKnight} mentioned above suggests that the level of difficulty in recovering a structure from its tree of tuples is connected to the Scott rank of the structure. It is quite easy to show that if $\mathcal{A}$ has Scott rank $1$, meaning that the automorphism orbit of each tuple is defined by an existential formula, then given a copy of $T_\mathcal{A}$, we can computably build a copy of $\mathcal{A}$. The proof is given in Case 1 of Section \ref{sec:mainproof}. On the other hand, the structure $\mathcal{A}$ constructed by Harrison-Trainor and Montalb\'an (such that $T_\mathcal{A}$ has a computable copy, while $\mathcal{A}$ does not) can easily be seen to have Scott rank $2$. A naive argument using the machinery of \cite{GonzalezKnight} shows that for each computable ordinal $\alpha = \beta+1$, there is an effective $\Delta^0_{2\beta+1}$ operator that, for all $\mathcal{A}$ of rank $\alpha$, takes copies of $T_\mathcal{A}$ to copies of $\mathcal{A}$. Using a worker construction, we prove something much stronger.
	
	\begin{thm}	\label{Bounding}
		
		For each computable ordinal $\alpha\geq 1$, there is an effective $\Delta^0_\alpha$ operator that, for each $\mathcal{A}\in Mod(\mathcal{L})$ of Scott rank at most $\alpha$ (or even \ELL-rank at most $\alpha$), takes copies of $T_\mathcal{A}$ to copies of~$\mathcal{A}$.
	\end{thm}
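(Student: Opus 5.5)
We will build the copy of $\mathcal{A}$ as the union of a chain of finite partial structures, one tuple of the tree at a time, and organize the search by Harrington's method of workers, using an $\alpha$-system, i.e.\ a family of workers indexed by ordinals $\beta\le\alpha$ with the level-$\beta$ worker having access to $\Delta^0_\beta$ information about the input tree $T$. The guiding picture is the following. Each node $\sigma\in T$ at level $n$ corresponds to an $n$-tuple of $\mathcal{A}$ and carries its atomic diagram. A copy of $\mathcal{A}$ amounts to a path-like choice: a sequence of nodes $\sigma_0\subset\sigma_1\subset\cdots$ with $\sigma_k$ at level $k$, such that every node of $T$ is, up to the back-and-forth relations, a ``subtuple'' of some $\sigma_k$. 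Equivalently, the chosen path must enumerate a tuple sequence that is onto $\mathcal{A}$. The atomic diagram of the output is then read off the labels along the path. Since $T$ alone only tells us the finite labels, the difficulty is to guarantee ontoness: we must eventually realize every node of $T$ (every tuple of $\mathcal{A}$) inside our path, and this requires knowing that the current node $\sigma_k$ can be extended to something in the same orbit-type as a given node.

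For rank $1$ (Case 1), the plan is direct. Orbits are defined by $\exists$-formulas, so whether a node $\tau$ embeds over the current path (meaning some extension of $\sigma_k$ has a subtuple whose label sequence matches $\tau$'s labels along its branch) is c.e.\ in $T$. Because the orbit of $\tau$ is determined by finitely many existential facts visible in $T$, a finite witness suffices. We then dovetail: at stage $k$ we handle the $k$-th node of $T$, search for an extension of the current path node realizing its labeled branch, and take it. Existential definability of orbits guarantees the realization is in the right orbit, so no later requirement is injured.

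For general $\alpha$, we replace the ``matches the branch'' check by the back-and-forth relation $\le_\beta$ between nodes (tuples), which is $\Pi^0_{2\beta}$-ish in $T$ in general. Here we use the Scott rank (or ELL-rank) hypothesis: orbits are $\Sigma_\alpha$-definable, so the relation ``$\bar a$ at node $\sigma$ has the same orbit as $\bar b$'' is decided by $\le_\alpha$ restricted to finitely many parameters. We run the workers so that the level-$\beta$ worker, with $\Delta^0_\beta(T)$ knowledge, proposes finite extensions of the path that are only guaranteed correct up to $\le_\beta$. It hands these to lower workers, who may need to change their choice of nodes but only within the $\le_\beta$-class, using the standard extension property of back-and-forth relations: if $\bar a\le_\beta\bar b$ then every extension of $\bar b$ is $\le_\gamma$ an extension of $\bar a$ for $\gamma<\beta$. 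The final (level-$0$) worker's output is a path through $T$ that is computable from $\Delta^0_\alpha(T)$, uniformly, giving the effective $\Delta^0_\alpha$ operator. The key steps, in order, are: (i) define the $\alpha$-system on finite labeled paths of $T$ with relations $\le_\beta$; (ii) verify the extendibility/consistency conditions of the metatheorem; (iii) show the level-$\alpha$ instructions (realize the $k$-th node's orbit) are $\Delta^0_\alpha(T)$-decidable thanks to $\Sigma_\alpha$ orbit definitions; (iv) conclude ontoness and correctness of the atomic diagram.

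The main obstacle is step (ii)/(iii). In a tree of tuples, the relations $\le_\beta$ must be expressed purely in terms of $T$, and this costs roughly twice the quantifiers (which is why the naive bound is $2\beta+1$). The workers must instead exploit that they never need to compute $\le_\beta$ itself; they only need to act consistently with it. We will first try to define the $\alpha$-system so that its relation $\le_\beta$ is the genuine $\mathcal{A}$-back-and-forth relation on tuples, and to show that a level-$\beta$ worker can find a $\le_\beta$-correct extension using only $\Delta^0_\beta(T)$. The point is that a $\Sigma_\beta$ fact about $T$ (the existence of a node below the current one with a given $\Pi_{<\beta}$ type) directly witnesses the needed extension.
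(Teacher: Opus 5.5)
Your proposal has a genuine gap at the step you flag as the main obstacle. The route you suggest for it, Ash's metatheorem with the asymmetric $\le_\beta$, is the one the paper explains does not work.

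\textbf{Why the metatheorem does not apply.} In the workers proof of Ash's metatheorem, the bottom worker must find a single step $\ell$ with $\ell^k\le_{k-1}\ell$ for every $k$ at once. That requires all the relations to be c.e.\ relative to its oracle. Here $\le_\gamma$ and $\equiv_\gamma$ between nodes of $T$ are $\Pi^0_{2\gamma}$, and in general no simpler.

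Your proposed fix does not hold up. You suggest that a $\Sigma_\beta$ fact about $T$ (a node ``with a given $\Pi_{<\beta}$ type'') witnesses the extension. But the worker is not handed a type; it has to compare nodes, which is a $\Pi^0_{2\gamma}$ question about $T$. So the search is not $\Sigma^0_\beta(T)$.

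For the same reason step (iii) fails. The rank hypothesis does not make the top-level instruction $\Delta^0_\alpha(T)$-decidable. Meeting an ELL-$n$-genericity requirement means finding $\tau'\prec p$ with $(\tau,\bar x,\bar y)\equiv_{n-1}(\tau',\bar x,\bar z)$, and that needs $\Delta^0_{2n-1}(T)$.

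\textbf{Oracle bookkeeping.} Your assignment, with the level-$\beta$ worker at $\Delta^0_\beta(T)$, would make the bottom worker computable in $T$. The paper instead relativizes the whole system to $\Delta^0_n(T)$, so Worker $k$ has oracle $\Delta^0_{n+k-1}(T)$. Then Worker $n$ can compute $\equiv_{n-1}$ and build the generic path, while Worker $k$ can compute only $\equiv_{k-1}$ (and $\equiv_k$ when $k\le n-2$). Only at limit $\alpha$, where $2\alpha=\alpha$, can one compute an ELL-$\alpha$-generic path directly.

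\textbf{The missing idea.} You need a way to pass information down so that no worker ever needs a relation above its level.
\begin{itemize}
\item Worker $n$ turns its instruction into a request $u^n$.
\item Each Worker $k$, for $2\le k\le n-1$, replaces the request $u^{k+1}$ it correctly and stably guesses by a new request $u^k\supseteq u^{k+1}$ with $\ell^k\equiv_{k-1}u^k$.
\item Worker $1$ fulfills $u^2$ using only $\equiv_1$ and $\equiv_0$. Since fulfillment is an extension condition, that same step fulfills every request above it.
\end{itemize}
Building $u^k$ requires combining $\ell^{k+1}\equiv_k\ell^k$ and $\ell^{k+1}\equiv_k u^{k+1}$ into $u^{k+1}\equiv_k\ell^k$, then taking one back-and-forth step. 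With the asymmetric relations you only have $\ell^{k+1}\le_k\ell^k$ and $\ell^{k+1}\le_k u^{k+1}$, which do not compose. This is exactly where your choice of $\le_\beta$ breaks, and it is why the theorem is proved with the symmetric $\equiv_\beta$ and ELL-rank.

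\textbf{The output is not a path.} As you yourself note, lower workers change nodes. The bottom worker's steps $(\sigma^1_m,g^1_m)$ only preserve $\equiv_0$. What it enumerates is a $\Delta^0_n(T)$ structure $\mathcal{B}$, read off through the maps $g^1_m$. The $\subseteq$-chain of the top worker yields a $\Delta^0_{2n-1}(T)$ ELL-$n$-generic path $p$ and an isomorphism $\mathcal{B}\cong\mathcal{A}^p$. Proposition~\ref{genericpath} then gives $\mathcal{A}^p\cong\mathcal{A}$.
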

	
	\ELL-rank, which we define in Section 2.2, is a new variant of Scott rank based on the symmetric back-and-forth relations $\equiv_\alpha$ considered by Ehrenfeucht \cite{Ehrenfeucht} and L\"{a}uchli and Leonard \cite{Lauchli}. If a structure has Scott rank at most $\alpha$, then it also has \ELL-rank at most $\alpha$, but not vice versa, so the theorem stated in terms of \ELL-rank is stronger. 
	Theorem \ref{Bounding} relativizes. For each $X$ and each $X$-computable ordinal $\alpha\geq 1$, there is an $X$-effective $\Delta^0_\alpha$ operator that, for each $\mathcal{A}\in Mod(\mathcal{L})$ of \ELL-rank at most $\alpha$, takes copies of $T_\mathcal{A}$ to copies of $\mathcal{A}$. Thus, we get the following corollary.
	
	\begin{cor}
		For each countable ordinal $\alpha\geq 1$, there is a $\bfDelta^0_\alpha$ operator that, for each $\mathcal{A}\in Mod(\mathcal{L})$ of Scott rank at most $\alpha$ (or even \ELL-rank at most $\alpha$), takes copies of $T_\mathcal{A}$ to copies of~$\mathcal{A}$.
	\end{cor}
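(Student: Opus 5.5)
The corollary is just the relativized form of Theorem~\ref{Bounding}, so I would prove it by fixing a parameter. Let $\alpha \geq 1$ be a countable ordinal. Choose $X \subseteq \omega$ such that $\alpha$ is $X$-computable; for example, take $X$ to code a well-ordering of $\omega$ of type $\alpha$. This is possible because every countable ordinal is computable relative to some oracle.

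Next, I would apply the relativized version of Theorem~\ref{Bounding}, which the paper states holds. It gives an $X$-effective $\Delta^0_\alpha$ operator $\Phi$ with the required property: for every $\mathcal{A}\in Mod(\mathcal{L})$ of \ELL-rank at most $\alpha$, $\Phi$ sends every copy of $T_\mathcal{A}$ to a copy of $\mathcal{A}$.

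The key step is to check that an $X$-effective $\Delta^0_\alpha$ operator is a $\bfDelta^0_\alpha$-measurable function between the relevant Polish spaces of structures. Such an operator has a code computable from $X$, meaning $\Phi$ is uniformly computable from $X^{(\beta)}$ together with the appropriate jump of the input, at level $\alpha$ in the usual indexing. The preimage of each basic open set under $\Phi$ is then a lightface $\Sigma^0_\alpha(X)$ set, and hence a boldface $\bfSigma^0_\alpha$ set. This is exactly the definition of a $\bfDelta^0_\alpha$ operator given in the footnote. No further combinatorics is needed, since boldface classes absorb real parameters.

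Finally, the Scott rank clause follows from the \ELL-rank clause. The paper notes that Scott rank at most $\alpha$ implies \ELL-rank at most $\alpha$, so the same operator $\Phi$ works for structures of Scott rank at most $\alpha$.

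The only point needing care is the indexing convention at limit levels: how lightface $\Delta^0_\alpha(X)$ corresponds to boldface $\bfDelta^0_\alpha$, including the usual $\alpha$ versus $\alpha+1$ shift for infinite $\alpha$. I expect this to follow directly from the paper's definitions of effective operators.
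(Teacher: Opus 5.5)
Your proposal is correct and is essentially the paper's own argument: relativize Theorem~\ref{Bounding} to an oracle $X$ for which $\alpha$ is $X$-computable, note that the resulting $X$-effective $\Delta^0_\alpha$ operator is a $\bfDelta^0_\alpha$ operator, and use that Scott rank at most $\alpha$ implies \ELL-rank at most $\alpha$ (which holds since $\equiv_\beta$ implies $\leq_\beta$). Your ``key step'' and your worry about indexing at infinite levels are immediate. The paper's footnote defines an effective $\Delta^0_\alpha$ operator as a $\bfDelta^0_\alpha$ operator with a computable code, so an $X$-effective one is a $\bfDelta^0_\alpha$ operator by definition.
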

	
	We also give a partial hardness result saying that at finite levels, this cannot be improved.  For finite $n > 1$, there is no $\bfDelta^0_{n-1}$ operator that works even for structures of Scott rank at most $n$ (and hence no operator that works for structures of \ELL-rank at most $n$). This argument uses the unfriendly jump inversion of Harrison-Trainor \cite{unfriendly}.
	
	In the proof of Theorem \ref{Bounding}, we define objects like those in Ash's metatheorem. We have an alternating tree, a family of back-and-forth relations, etc., forming an $\alpha$-system. For us, $\alpha$ will be finite. We also have an instruction function, providing high-level information. What prevented us from simply applying Ash's metatheorem is the fact that these back-and-forth relations are not all c.e.\ relative to the appropriate oracle. The proof of Ash's metatheorem given in \cite{AK00} used workers,\footnote{ Ash's original proof of the metatheorem did not use workers; it was inspired by Martin's proof of Borel Determinacy. } and we use workers here. The new idea is that the instructions are changed as they are passed down. In consequence, Worker $\beta$ needs only to enumerate up to the $\beta$th back-and-forth relation. To make this work, we found it necessary to use the symmetric back-and-forth relations $\equiv_\beta$, and not the standard back-and-forth relations $\leq_\beta$. We believe that there may be other applications of this method. See Subsection \ref{rem:new} for further discussion. 
	
	\section{Preliminaries}
	
	\subsection{Back-and-forth relations} 
	
	\begin{defn}\label{defn:bfsym}
		
		Let $\mathcal{L}$ be a finite relational language, and let $\mathcal{A}\in Mod(\mathcal{L})$. Suppose $\bar{a}$ and $\bar{b}$ are tuples in $\mathcal{A}$ of the same length. 
		
		\begin{enumerate}
			
			\item $\bar{a}\equiv_0\bar{b}$ if $\bar{a}$ and $\bar{b}$ satisfy the same quantifier-free type,
			
			\item for $\alpha > 0$, $\bar{a}\equiv_\alpha\bar{b}$ if for all $\beta <\alpha$,
			\begin{itemize}
				\item for each $\bar{c}$ there exists $\bar{d}$ such that $\bar{a},\bar{c}\equiv_\beta\bar{b},\bar{d}$, and
				\item for each $\bar{d}$ there exists $\bar{c}$ such that $\bar{a},\bar{c}\equiv_\beta\bar{b},\bar{d}$.
			\end{itemize}
			
		\end{enumerate}
		
	\end{defn}
	
	The original proof of the Scott Isomorphism Theorem \cite{Scott} involved symmetric relations, although the added tuples $\bar{c}$ and $\bar{d}$ were single elements. Ehrenfeucht \cite{Ehrenfeucht} and L\"{a}uchli and Leonard \cite{Lauchli} used tuples. Their equivalence relations were exactly as above. 
	
	Often in computable structure theory, one uses instead the asymmetric relations defined as follows.
	\begin{defn}
		
		Let $\mathcal{L}$ be a finite relational language, and let $\mathcal{A}\in Mod(\mathcal{L})$. Suppose $\bar{a}$ and $\bar{b}$ are tuples in $\mathcal{A}$ of the same length. 
		
		\begin{enumerate}
			
			\item $\bar{a}\leq_0\bar{b}$ if $\bar{a}$ and $\bar{b}$ satisfy the same quantifier-free type,
			
			\item for $\alpha > 0$, $\bar{a}\leq_\alpha\bar{b}$ if for all $\beta <\alpha$ and $\bar{c}$ there exists $\bar{d}$ such that $\bar{b},\bar{c}\leq_\beta\bar{a},\bar{d}$.
			
		\end{enumerate}
		
	\end{defn}
	
	What makes these asymmetric relations useful is the fact, proved by Karp \cite{Karp}, that $\bar{a} \leq_\alpha \bar{b}$ if and only if every $\Sigma_\alpha$ formula true of $\bar{b}$ is true of $\bar{a}$. Often one will see $\equiv_\alpha$ defined by $\bar{a} \equiv_\alpha \bar{b}$ if and only if $\bar{a} \leq_\alpha \bar{b}$ and $\bar{b} \leq_\alpha \bar{a}$. These equivalence relations are quite different from the ones in Definition \ref{defn:bfsym}. When we use the notation in this paper, we always intend the meaning in Definition 2.1, never the other possible meaning. We are describing the \textit{true} symmetric back-and-forth relations, rather than the symmetrization of the asymmetric relations. The precise reasons behind our use of the symmetric relations are somewhat technical, but we offer at least some explanation in Remark \ref{rem:symm} and Subsection \ref{rem:new}. Our symmetric relations still satisfy the following.
	
	\begin{lem}
		
		If $\bar{a}\equiv_\beta\bar{a}'$, then $\bar{a}\leq_\beta\bar{a}'$ and $\bar{a}'\leq_\beta\bar{a}$.
		
	\end{lem}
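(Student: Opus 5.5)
We argue by transfinite induction on $\beta$, proving simultaneously, for all pairs of tuples of the same length, that $\bar{a}\equiv_\beta\bar{a}'$ implies both $\bar{a}\leq_\beta\bar{a}'$ and $\bar{a}'\leq_\beta\bar{a}$. By the symmetry built into Definition \ref{defn:bfsym}, $\equiv_\beta$ is a symmetric relation: the two clauses are interchanged when the roles of $\bar{a}$ and $\bar{a}'$ are swapped. It therefore suffices to show that $\bar{a}\equiv_\beta\bar{a}'$ implies $\bar{a}\leq_\beta\bar{a}'$, since applying this to $\bar{a}'\equiv_\beta\bar{a}$ gives $\bar{a}'\leq_\beta\bar{a}$.

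For $\beta=0$, both $\equiv_0$ and $\leq_0$ mean having the same quantifier-free type, so there is nothing to prove.

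For $\beta>0$, assume $\bar{a}\equiv_\beta\bar{a}'$, and assume the lemma holds for all $\gamma<\beta$. To show $\bar{a}\leq_\beta\bar{a}'$, fix $\gamma<\beta$ and a tuple $\bar{c}$. We must find $\bar{d}$ such that $\bar{a}',\bar{c}\leq_\gamma\bar{a},\bar{d}$.

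1. Apply the second clause of $\bar{a}\equiv_\beta\bar{a}'$, namely "for each $\bar{d}$ there exists $\bar{c}$", with the given tuple playing the role on the $\bar{a}'$ side. This yields $\bar{d}$ with $\bar{a},\bar{d}\equiv_\gamma\bar{a}',\bar{c}$.
2. By the induction hypothesis at $\gamma$ (the "second half", i.e. $\bar{a}'\leq_\gamma\bar{a}$ for the extended tuples), we get $\bar{a}',\bar{c}\leq_\gamma\bar{a},\bar{d}$, which is exactly what is required.

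The only point needing care is bookkeeping. We must match which side of the asymmetric definition receives the universally quantified tuple, so that the correct clause of $\equiv_\beta$ is invoked. We must also carry both directions of the conclusion through the induction, since the step uses the reverse inequality from the induction hypothesis. There is no genuine obstacle beyond this, and limit and successor stages are handled uniformly because both definitions quantify over all $\gamma<\beta$.
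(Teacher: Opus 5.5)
Your proof is correct and is the straightforward induction on $\beta$ that the paper intends. One small point: the symmetry of $\equiv_\beta$ you invoke needs its own one-line induction, since the two clauses only interchange once $\equiv_\gamma$ is known to be symmetric for $\gamma<\beta$; you can avoid this by getting $\bar{a}'\leq_\beta\bar{a}$ directly from the first clause of $\bar{a}\equiv_\beta\bar{a}'$ together with the first half of the induction hypothesis.
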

	
	\begin{proof}
		
		This is a straightforward induction on $\beta$.
	\end{proof}
	
	\begin{remark}
		For $X$-computable ordinals $\beta$, there are $X$-computable $\Pi_{2\beta}$ formulas that, in all $\mathcal{L}$-structures, define the relations $\leq_\beta$. The same is true for $\equiv_\beta$. In \cite{ChenGonzalezHT}, it is shown that there is no simpler definition of the relations $\leq_\beta$, and, although we do not have a proof, we believe that this is true for the relations $\equiv_\beta$ as well.
	\end{remark}
	
	\begin{remark}
		We often define the back-and-forth relations $\bar{a}\leq_\alpha\bar{b}$ or $\bar{a}\equiv_\alpha\bar{b}$ between tuples $\bar{a}$ and $\bar{b}$ of different lengths, with $\bar{b}$ being of greater length than $\bar{a}$. This is just saying that the relation holds between $\bar{a}$ and the initial segment of $\bar{b}$ of the same length as $\bar{a}$.
	\end{remark}
	
	\subsection{Scott rank and \ELL-rank}
	
	While historically there have been numerous inequivalent definitions of Scott rank, in this paper we use Montalb\'{a}n's Scott rank introduced in \cite{robuster}. This rank is robust because, unlike the other definitions, it can be equivalently defined in several different ways. Montalb\'{a}n demonstrated the equivalence of several conditions. Below, we give just three of these conditions. 
	
	\begin{thm} [\cite{robuster}]
		
		For a structure $\mathcal{A}\in Mod(\mathcal{L})$ and a countable ordinal $\alpha\geq 1$, the following are equivalent: 
		
		\begin{enumerate}
			
			\item $\mathcal{A}$ has a $\Pi_{\alpha+1}$ Scott sentence,
			
			\item for all tuples $\bar{a}$ in $\mathcal{A}$, the orbit is defined by a $\Sigma_\alpha$ formula,
			
			\item for each tuple $\bar{a}$, there is a $\Sigma_\alpha$ formula true of $\bar{a}$ that (in $\mathcal{A}$) implies all $\Pi_\alpha$ formulas true of $\bar{a}$ in $\mathcal{A}$. 
			
		\end{enumerate}
		
	\end{thm}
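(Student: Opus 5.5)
The plan is to prove the cycle $(2)\Rightarrow(1)\Rightarrow(3)\Rightarrow(2)$, using Karp's characterization throughout: $\bar a\leq_\alpha\bar b$ if and only if every $\Sigma_\alpha$ formula true of $\bar b$ is true of $\bar a$. Equivalently, every $\Pi_\alpha$ formula true of $\bar a$ is true of $\bar b$.

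\emph{$(2)\Rightarrow(1)$.} For each tuple $\bar a$, fix a $\Sigma_\alpha$ formula $\varphi_{\bar a}$ defining its orbit. Let $\psi$ be the conjunction of the following sentences:
\begin{itemize}
\item for each length $n$, the sentence $\forall\bar x\,\bigvee_{|\bar a|=n}\varphi_{\bar a}(\bar x)$;
\item for each $\bar a$, the sentence $\exists\bar x\,\varphi_{\bar a}(\bar x)$;
\item for each $\bar a$, the sentence $\forall\bar x\,\bigl(\varphi_{\bar a}(\bar x)\rightarrow\bigwedge_{c}\exists y\,\varphi_{\bar a c}(\bar x,y)\bigr)$, with $c$ ranging over elements of $\mathcal A$.
\end{itemize}
In the third kind, the consequent is $\Pi_\alpha$ when $\alpha\ge 1$, since it is a conjunction of $\Sigma_\alpha$ formulas. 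So each conjunct is $\Pi_{\alpha+1}$. Suppose $\mathcal B\models\psi$ is countable. Then the first clause gives every tuple of $\mathcal B$ some $\varphi_{\bar a}$. The third clause, together with its back-direction counterpart obtained from the first clause, lets us run a back-and-forth between $\mathcal A$ and $\mathcal B$ along the relation ``$\bar b$ satisfies $\varphi_{\bar a}$''. This gives $\mathcal B\cong\mathcal A$.

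\emph{$(3)\Rightarrow(2)$.} Let $\varphi_{\bar a}$ be the $\Sigma_\alpha$ formula given by $(3)$, and suppose $\bar b\models\varphi_{\bar a}$. Then $\bar b$ satisfies the whole $\Pi_\alpha$-type of $\bar a$. That type includes $\neg\theta$ for every $\Sigma_\alpha$ formula $\theta$ false of $\bar a$, so the $\Sigma_\alpha$-type of $\bar b$ is contained in that of $\bar a$. In particular $\varphi_{\bar b}$ is true of $\bar a$. Hence $\bar a$ satisfies the $\Pi_\alpha$-type of $\bar b$, and the two tuples have the same $\Sigma_\alpha$- and $\Pi_\alpha$-types. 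Now build an automorphism by back-and-forth on such pairs. Given $\bar a\mapsto\bar b$ and a new $c$, the formula $\exists y\,\varphi_{\bar a c}(\bar x,y)$ is $\Sigma_\alpha$ and true of $\bar a$, hence true of $\bar b$, and this supplies a partner $d$. The back step is symmetric. So $\varphi_{\bar a}$ defines the orbit of $\bar a$.

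\emph{$(1)\Rightarrow(3)$.} I expect this step to be the main obstacle. Fix a $\Pi_{\alpha+1}$ Scott sentence $\psi=\bigwedge_i\forall\bar x\,\theta_i(\bar x)$ with each $\theta_i$ a $\Sigma_\alpha$ formula. Suppose, for contradiction, that some $\bar a$ has a $\Pi_\alpha$-type $p$ that is not implied (in $\mathcal A$) by any $\Sigma_\alpha$ formula true of $\bar a$. I would use a forcing / omitting-types construction in the countable fragment generated by $\psi$ and the relevant $\Sigma_\alpha$ formulas, aiming for a countable $\mathcal B$ that satisfies $\psi$, hence $\mathcal B\cong\mathcal A$. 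The construction would try to make $\mathcal B$ contain a tuple $\bar b$ with the same $\Sigma_\alpha$-type as $\bar a$ but not realizing $p$, which is impossible in a copy of $\mathcal A$. Non-isolation of $p$ is exactly what lets each finite condition be extended to kill one more formula of $p$ on $\bar b$. Since $\psi$ is $\Pi_{\alpha+1}$, its truth should be preserved under extending $\Sigma_\alpha$ commitments. The delicate point is arranging that the $\Sigma_\alpha$ witnesses demanded by the $\theta_i$ are met without accidentally forcing $p$ onto $\bar b$. If this is too unwieldy, the fallback is to go through $\leq_\alpha$ directly: show that $\bar a\leq_\alpha\bar b$ in a model of $\psi$ forces $\bar a$ and $\bar b$ to be automorphic, and then extract the isolating $\Sigma_\alpha$ formula by a Baire-category argument over the countable set of candidate formulas.
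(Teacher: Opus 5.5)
The paper does not prove this theorem; it quotes it from Montalb\'an, so I am judging your argument on its own. Your $(3)\Rightarrow(2)$ is correct. Your $(2)\Rightarrow(1)$ needs repair: as written, $\psi$ need not be a Scott sentence. Nothing forces $\varphi_{\bar a}(\bar b)$ to fix the atomic type of $\bar b$. Also, clause 1 only gives $\bar b d\models\varphi_{\bar a'c'}$ for some $\bar a'c'$, with no link between $\bar a'$ and $\bar a$. For example, take $\mathcal{A}=(\mathbb{Q},<)$ and $\varphi_{\bar a}:=D_{\bar a}\vee\exists z\,(z<z)$; the structure on $\omega$ in which $<$ holds of every pair satisfies all three of your clauses. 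Even with $D_{\bar a}$ conjoined, clause 1 does not yield the back step in general. The standard fix is to make $D_{\bar a}$ a conjunct of $\varphi_{\bar a}$ and add the conjuncts $\forall\bar x\,\bigl(\varphi_{\bar a}(\bar x)\rightarrow\forall y\bigvee_c\varphi_{\bar a c}(\bar x,y)\bigr)$, which are $\Pi_{\alpha+1}$. Separately, $\bigwedge_c\exists y\,\varphi_{\bar a c}$ is $\Pi_{\alpha+1}$, not $\Pi_\alpha$, though your conjunct is still $\Pi_{\alpha+1}$.

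The genuine gap is $(1)\Rightarrow(3)$, which carries all the content of the theorem.

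\emph{Your target is incoherent.} The negation of a $\Pi_\alpha$ formula is $\Sigma_\alpha$, so a tuple's $\Sigma_\alpha$-type determines its $\Pi_\alpha$-type; you use exactly this in $(3)\Rightarrow(2)$. Hence no tuple in any structure has the same $\Sigma_\alpha$-type as $\bar a$ while failing $p$. If you meant ``satisfies every $\Sigma_\alpha$ formula true of $\bar a$'', nothing you have excludes such a tuple from copies of $\mathcal{A}$; excluding it is essentially the theorem.

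\emph{Omitting types in the fragment cannot work as stated.} $p$ is realized in every countable model of $\psi$, so that theorem only tells you $p$ is isolated modulo $\psi$ by \emph{some} formula of the fragment. Within countable $\mathcal{A}$, $p$ is already equivalent to a single $\Pi_\alpha$ formula, so this gives no control on complexity. Producing a $\Sigma_\alpha$ isolator is precisely where $\psi\in\Pi_{\alpha+1}$ must be used, and ``its truth should be preserved under extending $\Sigma_\alpha$ commitments'' does not do that.

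\emph{The fallback restates the difficulty.} You give no way to get ``$\bar a\le_\alpha\bar b$ implies automorphic'' from $\psi$, and that property alone only makes orbits $\Pi_\alpha$-definable.

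What is missing is a complexity-controlled construction of a model of $\psi$ that omits $p$.
\begin{itemize}
\item \emph{Reformulate.} Since $\mathcal{A}$ is countable, the realizations in $\mathcal{A}$ of each $\Pi_\beta$-type are defined by one $\Pi_\beta$ formula. So $p$ is unsupported iff $\bar a$ is $\alpha$-free: for every $\bar b$ and $\beta<\alpha$ there are $\bar a',\bar b'$ with $\bar a\bar b\le_\beta\bar a'\bar b'$ and $\bar a\not\le_\alpha\bar a'$. This is the negation of the condition in the Gonzalez--Knight proposition quoted in the paper.
\item \emph{Build $\mathcal{B}$.} Take $\mathcal{B}$ as a limit of tuples $\bar c_0,\bar c_1,\ldots$ from $\mathcal{A}$. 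At stage $m$ you may replace $\bar c_m$ by some $\bar c'$ with $\bar c_m\le_{\beta_m}\bar c'$, for a nondecreasing sequence $\beta_m<\alpha$, and then extend. Such moves preserve every $\Pi_{\beta_m}$ fact already committed.
\item \emph{Satisfy $\psi$.} Write $\psi=\bigwedge_i\forall\bar x\,\theta_i$. Meet $\theta_i$ on a tuple by adding a witness for a disjunct $\exists\bar y\,\rho$ with $\rho\in\Pi_\beta$, and raise the level to at least $\beta$. Dovetail lower-level witnesses so that these $\Pi_\beta$ facts transfer to $\mathcal{B}$.
\item \emph{Omit $p$.} Suppose a subtuple $\bar c$ currently realizes $p$. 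Then $\bar a\le_\alpha\bar c$ gives $\bar c\bar b\le_\beta\bar a\bar b^*$. By $\alpha$-freeness and transitivity, move $\bar c\bar b$ to some $\bar a'\bar b'$ with $\bar a'$ not realizing $p$. Then lock in a $\Sigma_\alpha$ formula true of $\bar a'$ and false of $\bar a$.
\end{itemize}
The limit satisfies $\psi$ and omits $p$, so it is not isomorphic to $\mathcal{A}$, contradicting that $\psi$ is a Scott sentence.
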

	
	\begin{defn}
		
		The \emph{Scott rank} of $\mathcal{A}$ is the least ordinal $\alpha$ satisfying the conditions of Montalb\'{a}n's Theorem.
		
	\end{defn}
	
	The following observation is given in \cite{GonzalezKnight}.
	
	\begin{prop}
		
		$\mathcal{A}$ has Scott rank $\alpha$ iff $\alpha$ is least such that the following equivalent conditions hold:
		
		\begin{enumerate}
			
			\item for each $\bar{a}$, there exist $\beta < \alpha$ and $\bar{b}$ such that for all $\bar{a}',\bar{b}'$, $\bar{a},\bar{b}\leq_\beta \bar{a}',\bar{b}'$ implies $\bar{a}\leq_\alpha\bar{a}'$, 
			
			\item for each $\bar{a}$, there exist $\beta < \alpha$ and $\bar{b}$ such that for all 
			$\bar{a}',\bar{b}'$, $\bar{a},\bar{b}\leq_\beta\bar{a}',\bar{b}'$ implies $\bar{a}\cong\bar{a}'$. 
			
		\end{enumerate}
		
	\end{prop}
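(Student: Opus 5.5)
The plan is to show that, for each fixed countable $\alpha\geq 1$, condition (1) of the proposition is equivalent to condition (3) of Montalb\'an's Theorem, and condition (2) is equivalent to condition (2) of Montalb\'an's Theorem. Since the three conditions in Montalb\'an's Theorem are equivalent for each $\alpha$, conditions (1) and (2) here are then equivalent to each other for each $\alpha$. Moreover, the least $\alpha$ at which they hold is the least $\alpha$ satisfying Montalb\'an's conditions, which is the Scott rank. (For $\alpha=0$ there is no $\beta<0$, so neither condition holds, consistent with Scott rank being at least $1$ under this convention.) The main tool throughout is Karp's theorem: $\bar{a},\bar{b}\leq_\beta\bar{a}',\bar{b}'$ iff every $\Sigma_\beta$ formula true of $\bar{a}',\bar{b}'$ is true of $\bar{a},\bar{b}$. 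Equivalently, every $\Pi_\beta$ formula true of $\bar{a},\bar{b}$ is true of $\bar{a}',\bar{b}'$.

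The first step turns a witness $\beta<\alpha$, $\bar{b}$ into a $\Sigma_\alpha$ formula. Since $\mathcal{A}$ is countable, we can choose countably many $\Pi_\beta$ formulas true of $\bar{a},\bar{b}$ as follows: for each pair $\bar{a}',\bar{b}'$ in $\mathcal{A}$ with $\bar{a},\bar{b}\not\leq_\beta\bar{a}',\bar{b}'$, Karp's theorem gives a $\Pi_\beta$ formula true of $\bar{a},\bar{b}$ and false of $\bar{a}',\bar{b}'$, and we pick one such formula. Let $\varphi(\bar{x},\bar{y})$ be their conjunction; this is again $\Pi_\beta$ (for $\beta=0$ the language is finite relational, so a single quantifier-free formula suffices). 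Then in $\mathcal{A}$,
\[\varphi(\bar{a}',\bar{b}') \iff \bar{a},\bar{b}\leq_\beta\bar{a}',\bar{b}',\]
and hence $\chi(\bar{x}) := \exists\bar{y}\,\varphi(\bar{x},\bar{y})$ is a $\Sigma_\alpha$ formula true of $\bar{a}$. Moreover, $\chi(\bar{a}')$ holds iff some $\bar{b}'$ satisfies $\bar{a},\bar{b}\leq_\beta\bar{a}',\bar{b}'$.

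The two directions connecting condition (1) with Montalb\'an's (3) then go as follows.
\begin{itemize}
\item Assume (1). By the first step, the formula $\chi$ obtained from the witness $\beta,\bar{b}$ is true of $\bar{a}$ and satisfies: $\chi(\bar{a}')$ implies $\bar{a}\leq_\alpha\bar{a}'$. By Karp's theorem, $\bar{a}\leq_\alpha\bar{a}'$ means every $\Pi_\alpha$ formula true of $\bar{a}$ is true of $\bar{a}'$. So $\chi$ implies all $\Pi_\alpha$ formulas true of $\bar{a}$, which is Montalb\'an's (3).
\item Assume Montalb\'an's (3), with $\Sigma_\alpha$ formula $\chi$ true of $\bar{a}$. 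Write $\chi$ as a countable disjunction of formulas $\exists\bar{y}\,\psi$ with $\psi\in\Pi_\beta$ for some $\beta<\alpha$. Pick a disjunct true of $\bar{a}$ and a witness $\bar{b}$ for it. If $\bar{a},\bar{b}\leq_\beta\bar{a}',\bar{b}'$, then $\psi(\bar{a}',\bar{b}')$ holds by Karp's theorem, so $\chi(\bar{a}')$ holds. Hence every $\Pi_\alpha$ formula true of $\bar{a}$ is true of $\bar{a}'$, and by Karp's theorem again $\bar{a}\leq_\alpha\bar{a}'$, giving (1).
\end{itemize}

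Exactly the same two arguments, with ``$\bar{a}\leq_\alpha\bar{a}'$'' replaced by ``$\bar{a}'$ is in the orbit of $\bar{a}$'', show that (2) is equivalent to Montalb\'an's (2). In one direction, the formula $\chi$ from the first step defines the orbit, since every $\bar{a}'$ in the orbit of $\bar{a}$ satisfies $\chi$ via the image of $\bar{b}$ under the automorphism. In the other direction, a disjunct of the defining $\Sigma_\alpha$ formula together with its witness $\bar{b}$ gives the required $\beta$ and $\bar{b}$.

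The only delicate point, which I expect to be the main obstacle, is the first step: making sure $\varphi$ is a legitimate $\Pi_\beta$ formula of $\mathcal{L}_{\omega_1\omega}$, that is, a countable conjunction. Handling this by choosing one separating formula per bad pair $\bar{a}',\bar{b}'$ in the countable structure resolves it. It also requires care for limit $\alpha$, where $\Sigma_\alpha$ formulas are disjunctions over $\beta<\alpha$ of $\exists\Pi_\beta$ formulas; the argument above picks a single disjunct, so it covers this case as well.
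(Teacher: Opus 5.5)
Your proof is correct. The paper gives no proof of this proposition; it quotes it from \cite{GonzalezKnight}. The natural comparison is therefore with the paper's proof of the symmetric analogue, Proposition~\ref{prop5}.

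There, (1)$\Rightarrow$(2) is proved directly and without formulas. The paper checks that the maps $\bar a\mapsto\bar a'$ with $\bar a\equiv_\alpha\bar a'$ have the back-and-forth property, and the ``back'' half comes for free from the symmetry of $\equiv_\alpha$.

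You instead use Karp's theorem together with your explicit $\Pi_\beta$ formula defining, inside $\mathcal{A}$, the cone $\{(\bar a',\bar b') : \bar a,\bar b\leq_\beta\bar a',\bar b'\}$. With these you show, at each $\alpha\geq 1$, that (1) is equivalent to Montalb\'an's condition (3) and (2) to Montalb\'an's condition (2), and then let Montalb\'an's theorem finish.

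Your route gives both halves of the statement at once: the equivalence of (1) and (2), and the identification of the least such $\alpha$ with the Scott rank. A purely combinatorial back-and-forth argument cannot give the second half by itself, since Scott rank is defined through formulas. The price is that Karp's and Montalb\'an's theorems are used as black boxes. This is also why, for \ELL-rank, the paper simply defines the rank by the conditions and needs only the back-and-forth argument.

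If you wanted to imitate Proposition~\ref{prop5} directly in the asymmetric setting, you would need one extra observation: under (1), $\bar a\leq_\alpha\bar a'$ implies $\bar a'\leq_\alpha\bar a$. To see this, take the witness $\beta,\bar b$ that (1) provides for $\bar a'$. Use $\bar a\leq_\alpha\bar a'$ to get $\bar d$ with $\bar a',\bar b\leq_\beta\bar a,\bar d$, and apply (1). Without this observation, $\bar a\leq_\alpha\bar a'$ only yields relations of the form $\bar a',\bar c\leq_\beta\bar a,\bar d$. Then (1) produces $\leq_\alpha$ in the wrong direction for the forth step.
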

	
	For the symmetric relations $\equiv_\beta$, we have the analogous result.
	
	\begin{prop}
		\label{prop5}
		
		For a countable ordinal $\alpha$ and $\mathcal{A}\in Mod(\mathcal{L})$, the following are equivalent:
		
		\begin{enumerate}
			
			\item for each $\bar{a}$, there exist $\beta < \alpha$ and $\bar{b}$ such that for all $\bar{a}'$ and 
			$\bar{b}'$, $\bar{a},\bar{b}\equiv_\beta\bar{a}',\bar{b}'$ implies $\bar{a}\equiv_\alpha\bar{a}'$,
			
			\item for each $\bar{a}$, there exist $\beta < \alpha$ and $\bar{b}$ such that for all $\bar{a}'$ and 
			$\bar{b}'$, $\bar{a},\bar{b}\equiv_\beta\bar{a}',\bar{b}'$ implies $\bar{a}\cong\bar{a}'$.
			
		\end{enumerate}
		
	\end{prop}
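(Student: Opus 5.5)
We prove the two implications separately. The direction (2)$\Rightarrow$(1) is immediate once we know that isomorphic tuples are $\equiv_\alpha$-equivalent. So we first record, by induction on $\gamma$, that if $\bar{a}\cong\bar{a}'$ (i.e.\ some automorphism $f$ of $\mathcal{A}$ maps $\bar{a}$ to $\bar{a}'$), then $\bar{a}\equiv_\gamma\bar{a}'$ for every $\gamma$. For $\gamma=0$ this holds because automorphisms preserve quantifier-free types. For the inductive step, given $\bar{c}$ we take $\bar{d}=f(\bar{c})$, and given $\bar{d}$ we take $\bar{c}=f^{-1}(\bar{d})$. Then $\bar{a},\bar{c}\cong\bar{a}',\bar{d}$ via $f$, so by induction they are $\equiv_\beta$ for all $\beta<\gamma$. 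With this in hand, the witnesses $\beta,\bar{b}$ from (2) also witness (1).

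For (1)$\Rightarrow$(2), fix $\bar{a}$ and let $\beta<\alpha$ and $\bar{b}$ be the witnesses given by (1). We claim the same witnesses work for (2). Suppose $\bar{a},\bar{b}\equiv_\beta\bar{a}',\bar{b}'$; then (1) gives $\bar{a}\equiv_\alpha\bar{a}'$, and we must upgrade this to $\bar{a}\cong\bar{a}'$. The plan is a back-and-forth argument. Let $I$ be the set of pairs $(\bar{x},\bar{y})$ of tuples in $\mathcal{A}$ of equal length with $\bar{x}\equiv_\alpha\bar{y}$. We show $I$ has the back-and-forth property; since $\mathcal{A}$ is countable and $(\bar{a},\bar{a}')\in I$, this yields an automorphism extending $\bar{a}\mapsto\bar{a}'$. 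Note that $I$ is symmetric, and $\equiv_0$ holds within $I$ since $\equiv_\alpha$ implies $\equiv_0$; the latter is a small induction using the empty extension when $\alpha>0$, and the case $\alpha=0$ is vacuous since there is no $\beta<0$.

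The key step, and the main obstacle, is the extension property. Take $(\bar{x},\bar{y})\in I$ and an element $c$; we need $d$ with $\bar{x}c\equiv_\alpha\bar{y}d$. Apply (1) to the tuple $\bar{x}c$ to obtain $\beta'<\alpha$ and $\bar{e}$ such that $\bar{x},c,\bar{e}\equiv_{\beta'}\bar{z}$ implies $\bar{x}c\equiv_\alpha$ the corresponding initial segment of $\bar{z}$. Because $\bar{x}\equiv_\alpha\bar{y}$ and $\beta'<\alpha$, the forth clause of Definition~\ref{defn:bfsym}, applied with the added tuple $c\bar{e}$, produces $d\bar{e}'$ with $\bar{x},c,\bar{e}\equiv_{\beta'}\bar{y},d,\bar{e}'$. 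Hence $\bar{x}c\equiv_\alpha\bar{y}d$, as required. The back direction follows by the symmetry of $\equiv_\alpha$; we verify that symmetry by induction, since the definition's two clauses are mirror images of each other.

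This is exactly where the symmetric relations matter: the definition permits extending by arbitrary tuples at every level below $\alpha$, so the witness tuple $\bar{e}$ can be carried along. Finally, we must handle the degenerate case in which $\alpha=0$ makes (1) vacuous; there the hypothesis $\beta<\alpha$ cannot be met, so both conditions fail simultaneously and the equivalence holds trivially.
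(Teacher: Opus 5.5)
Your proposal is correct and follows essentially the same route as the paper. The paper also treats (2)$\Rightarrow$(1) as clear and proves (1)$\Rightarrow$(2) by showing that the maps $\bar{a}\mapsto\bar{a}'$ with $\bar{a}\equiv_\alpha\bar{a}'$ have the back-and-forth property: it applies (1) to $\bar{a},c$, takes one forth step of $\equiv_\alpha$ with the added tuple $c,\bar{b}$, and gets the back direction from symmetry. Your extra verifications (automorphic tuples are $\equiv_\gamma$-equivalent, symmetry of $\equiv_\alpha$, $\equiv_\alpha$ implies $\equiv_0$, and the trivial case $\alpha=0$) only fill in details the paper leaves implicit.
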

	
	\begin{proof}
		
		It is clear that (2) implies (1). Supposing (1), we prove (2). It is enough to show that the family of functions $\bar{a}\rightarrow\bar{a}'$ such that $\bar{a}\equiv_\alpha\bar{a}'$ has the back-and-forth property. Suppose $\bar{a}\equiv_\alpha\bar{a}'$. Given $c$, we need $c'$ such that $\bar{a},c\equiv_\alpha \bar{a}',c'$. By assumption, there exist $\beta < \alpha$ and $\bar{b}$ such that $\bar{a},c,\bar{b}\equiv_\beta\bar{a}',c',\bar{b}'$ implies $\bar{a},c\equiv_\alpha\bar{a}',c'$. Since $\bar{a}\equiv_\alpha\bar{a}'$, there exist $c'$ and $\bar{b}'$ such that $\bar{a},c,\bar{b}\equiv_\beta\bar{a}',c',\bar{b}'$. Then $\bar{a},c\equiv_\alpha\bar{a}',c'$, as required. We also need to show that given $c'$, there exists $c$ such that $\bar{a},c\equiv_\alpha \bar{a}',c'$, but this follows from the symmetry of the relation~$\equiv_\alpha$.
	\end{proof}
	
	Here is the modified rank.
	
	\begin{defn}
		
		An $\mathcal{L}$-structure $\mathcal{A}$ has \emph{\ELL-rank $\alpha$} if $\alpha$ is least such that the conditions in Proposition \ref{prop5} hold. 
		
	\end{defn}
	
	\begin{remark}
		\label{rem:symm}
		
		This differs slightly from the definition saying that the rank of an $\mathcal{L}$-structure $\mathcal{A}$ is the least $\alpha$ such that for all tuples $\bar{a},\bar{b}$ we have
		\[ \bar{a}\equiv_\alpha \bar{b} \Longrightarrow \bar{a} \equiv_{\alpha + 1} \bar{b}.\]
		That notion of Scott rank is the one called $SR(\mathcal{A})$ in the book by Ash and Knight \cite{AK00}. We use \ELL-rank because it is precisely the rank that arises from the proof of Theorem \ref{Bounding} and, hence, gives the optimal result.
		
	\end{remark}
	
	\subsection{The tree of tuples}
	
	Let $\mathcal{L}$ be a relational language. For simplicity, we suppose that it is finite.\footnote{To justify this, we may apply a result of Lavrov \cite{Lavrov} saying that for any countable language $\mathcal{L}$, $Mod(\mathcal{L})$ is Borel reducible to the class of undirected graphs, and if $\mathcal{L}$ is computable, then the reduction is Turing computable.}
We include in our language the logical constant $\top$ (truth). Let $\mathcal{D}$ be the set of $\mathcal{L}$-formulas $D(x_1,\ldots,x_n)$ formed as the conjunction of formulas saying either $R(\bar{x})$ or $\neg{R(\bar{x})}$, for each relation symbol $R\in \mathcal{L}$ and each tuple $\bar{x}$ from among $x_1,\ldots,x_n$. We include in $\mathcal{D}$ the formula $\top$ (truth), with the empty sequence of variables. To each $\mathcal{A}\in Mod(\mathcal{L})$, we assign a labeled tree $T_\mathcal{A}$, the \emph{tree of tuples}. The language of the labeled trees consists of a unary function symbol $p$, for the predecessor function (which has the convention $p(root)=root$), and unary relation symbols $U_D$ for $D\in\mathcal{D}$, for the labels. The tree itself will be isomorphic to $\omega^{<\omega}$. Each node $\sigma$ of the tree $T_\mathcal{A}$ \emph{represents} a tuple $a_1,\ldots,a_n$ of distinct elements of $\mathcal{A}$, with each non-empty tuple showing up infinitely many times. The node $\sigma$ is in a single unary relation $U_D$, where $\mathcal{A}\models D(a_1,\ldots,a_n)$.

\bigskip

	\begin{defn}
		
		For $\mathcal{A}\in Mod(\mathcal{L})$, the nodes of $T_\mathcal{A}$ are defined by induction on the level as follows:
		
		\begin{enumerate}
			
			\item At level $0$, the root node, representing the empty tuple, has the label $U_\top$.
			
			\item Suppose $\sigma$ is a node at level $n$, representing an $n$-tuple $\bar{a}$ from $\mathcal{A}$, and with label $U_D$, where $D$ gives the atomic type of $\bar{a}$. Let $a'$ be an element of $\mathcal{A}$ not in $\bar{a}$. Then $\sigma$ has infinitely many successors $\sigma'$ representing the tuple $\bar{a},a'$, with the appropriate label $U_{D'}$, where $D'(x_1,\ldots,x_n,x_{n+1})$ gives the atomic type of the $(n+1)$-tuple $\bar{a},a'$. 
			
		\end{enumerate} 
		
	\end{defn}
	
	The tree of tuples captures many aspects of the structure, the most important being the back-and-forth structure. Indeed, this is the reason that
	\[ T_{\mathcal{A}} \cong T_{\mathcal{B}} \Longrightarrow \mathcal{A} \cong \mathcal{B}.\]
	If $T_{\mathcal{A}} \cong T_{\mathcal{B}}$ then using a back-and-forth procedure, we can arrive at an isomorphism between a copy of $\mathcal{A}$ and a copy of $\mathcal{B}$, each represented by a path through the appropriate tree. We make the following definition to capture the back-and-forth relations on the tree $T_{\mathcal{A}}$. 
	In the following definition, we use the notation $\sigma\succeq\tau$ for $\sigma,\tau \in T_{\mathcal{A}}$ to mean that $p^n(\sigma)=\tau$ for some $n$.
	In $T_{\mathcal{A}}$, the relation $\sigma\succeq\tau$ is defined by a computable $\Sigma_1$ formula. This shows that the relation $\sigma\succeq\tau$ is uniformly c.e.\ relative to the tree. In fact, it is computable relative to the tree. To see this, note that $\sigma\succeq\tau$ iff $\tau$ is among the finite set of predecessors of $\sigma$, which we can effectively list. 
		
	\begin{defn}
		 If $\sigma$ is at level $n$ in $T_\mathcal{A}$, we say that $x_1,\ldots,x_n$ are the variables of $\sigma$.  Let $\sigma,\tau \in T_{\mathcal{A}}$, and let $\bar{x},\bar{y}$ be tuples of variables of the same length, where $\bar{x}$ is among the variables of $\sigma$ and $\bar{y}$ is among the variables of $\tau$.  Then 
		
				\begin{enumerate}
			\item $(\sigma,\bar{x}) \equiv_0 (\tau,\bar{y})$ if $\sigma$ assigns to $\bar{x}$ the same quantifier-free type that $\tau$ assigns to $\bar{y}$.
			\item for $\alpha > 0$, $(\sigma,\bar{x}) \equiv_\alpha (\tau,\bar{y})$ if for all $\beta < \alpha$,
			\begin{itemize}
				\item for each $\sigma' \succeq \sigma$ and each $\bar{x}'$  (among the variables of $\sigma'$), there exist $\tau' \succeq \tau$ and $\bar{y}'$ (among the variables of $\tau'$) such that $(\sigma',\bar{x}\bar{x}') \equiv_\beta (\tau',\bar{y}\bar{y}')$, and
				\item for each $\tau' \succeq \tau$ and each $\bar{y}'$ (among the variables of $\tau'$), there exist $\sigma' \succeq \sigma$ and $\bar{x}'$ (among the variables of $\sigma'$) such that $(\sigma',\bar{x}\bar{x}') \equiv_\beta (\tau',\bar{y}\bar{y}')$.
			\end{itemize}
		\end{enumerate}
		We can make a similar definition for the asymmetric back-and-forth relations. Like the back-and-forth relations on $\mathcal{A}$ itself, the relations $\equiv_\alpha$ on $T_\mathcal{A}$ are $\Pi^0_{2\alpha}$.
	\end{defn}

	In $T_{\mathcal{A}}$, each node represents a tuple from $\mathcal{A}$.  Suppose $\sigma$ represents $(a_1,\ldots,a_r)$ and $\tau$ represents $(b_1,\ldots,b_s)$  ---we have that \[(\sigma,(x_{i_1},\ldots,x_{i_n})) \equiv_\alpha (\tau,(y_{j_1},\ldots,y_{j_n}))\]
	if and only if $a_{i_1},\ldots,a_{i_n} \equiv_\alpha b_{j_1},\ldots,b_{j_n}$. The advantage of the definition above is that it is independent of the identification of nodes in $T_{\mathcal{A}}$ with tuples of $\mathcal{A}$ and hence is independent of the copy of $\mathcal{A}$.
	
	\subsection{Generic paths}
	
	For an infinite $\mathcal{A}\in Mod(\mathcal{L})$, and $\sigma$ at level $n$ of $T_\mathcal{A}$, the label assigns a quantifier-free type to the variables $x_1,\ldots,x_n$, making all of the elements distinct. Each path $p$ through $T$ yields a structure $\mathcal{A}^p$, with universe equal to the full set of variables $\{x_1,x_2,\ldots\ \}$. The structure $\mathcal{A}^p$ is always isomorphic to a substructure of $\mathcal{A}$, and, in fact, we get copies of all infinite substructures of $\mathcal{A}$ this way. Each node $\sigma$ extends to a path $p$ such that $\mathcal{A}^p\cong\mathcal{A}$. We want, just looking at $T\cong T_\mathcal{A}$, to construct a path $p$ that we can be sure will yield a copy of $\mathcal{A}$. 
	
	In \cite{HTMontalban}, it is noted that a sufficiently generic path will yield a copy of $\mathcal{A}$. 
	In \cite{GonzalezKnight}, we define a forcing notion and show that if $\mathcal{A}$ has a $\Pi_\alpha$ Scott sentence, and $p$ is a generic path through $T_\mathcal{A}$, then $\mathcal{A}^p\cong\mathcal{A}$. Also in \cite{GonzalezKnight}, we define, without reference to forcing, what it means for a path $p$ to be ``$\alpha$-generic,'' and we show that for $\mathcal{A}$ of Scott rank at most $\alpha$, if $p$ is an $\alpha$-generic path through $T_\mathcal{A}$, then $\mathcal{A}^p\cong\mathcal{A}$. 
	We will define what it means for a path to be \ELL-$\alpha$-generic, and we show that for $\mathcal{A}$ of \ELL-rank at most $\alpha$, if $p$ is an \ELL-$\alpha$-generic path through $T_\mathcal{A}$, then $\mathcal{A}^p\cong\mathcal{A}$. 
	We begin with the definition of $\alpha$-genericity from \cite{GonzalezKnight}. 
	
	\begin{defn}
		Let $T \cong T_\mathcal{A}$, and let $\alpha\geq 1$ be a countable ordinal. A path $p$ through $T$ is \emph{$\alpha$-generic} provided that for all $\beta < \alpha$ and all pairs of nodes $(\nu,\tau)$ such that $\nu\prec p$ and $\nu\prec\tau$, where $\nu$ has variables $\bar{x}$, and $\tau$ has variables $\bar{x},\bar{y}$, there is some $\tau'$, with variables including $\bar{x}$ and some $\bar{z}$, such that $\nu\prec\tau'\prec p$ and $(\tau,\bar{x},\bar{y})\leq_\beta (\tau',\bar{x},\bar{z})$. 
	\end{defn}
	
	Here is the analogous definition, with $\equiv_\beta$ replacing $\leq_\beta$. 
	
	\begin{defn}
		
		Let $T\cong T_\mathcal{A}$, and let $\alpha\geq 1$ be a countable ordinal. A path $p$ through $T$ is 
		\emph{\ELL-$\alpha$-generic} provided that for all $\beta < \alpha$ and all pairs of nodes $(\nu,\tau)$ such that $\nu\prec p$ and $\nu\prec\tau$, where $\nu$ has variables $\bar{x}$, and $\tau$ has variables $\bar{x},\bar{y}$, there is some $\tau'$, with variables including $\bar{x}$ and some $\bar{z}$, such that $\nu\prec\tau'\prec p$ and $(\tau,\bar{x},\bar{y})\equiv_\beta (\tau',\bar{x},\bar{z})$. 
		
	\end{defn}
	
	Here is the analogue of the result from \cite{GonzalezKnight}.
	
	\begin{prop}
		\label{genericpath}
		
		Suppose $\mathcal{A}\in Mod(\mathcal{L})$ has \ELL-rank at most $\alpha$, and let $p$ be an \ELL-$\alpha$-generic path through $T \cong T_\mathcal{A}$. Then $\mathcal{A}^p\cong\mathcal{A}$. 
		
	\end{prop}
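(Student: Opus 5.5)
We will show that the map sending each variable $x_i$ of the path $p$ to itself, viewed through the tuples that the nodes of $p$ represent, can be refined into a back-and-forth system between $\mathcal{A}^p$ and $\mathcal{A}$. Concretely, fix an identification of $T$ with $T_\mathcal{A}$, so each node $\sigma\prec p$ at level $n$ represents an $n$-tuple $\bar{a}_\sigma$ of $\mathcal{A}$. Then $\mathcal{A}^p$ restricted to $x_1,\ldots,x_n$ is isomorphic (via $x_i\mapsto a_i$) to the substructure on $\bar{a}_\sigma$. The plan is to build, by a back-and-forth argument, finite partial maps $x_{i_1},\ldots,x_{i_k}\mapsto \bar{c}$ from $\mathcal{A}^p$ to $\mathcal{A}$ such that $\bar{a}_\sigma{\restriction}(i_1,\ldots,i_k)\cong \bar{c}$ in the sense of Proposition~\ref{prop5}(2), that is, lying in the same automorphism orbit of $\mathcal{A}$, for $\sigma\prec p$ long enough. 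Since the orbit relation $\cong$ on tuples of $\mathcal{A}$ has the back-and-forth property within $\mathcal{A}$, it suffices to produce such maps that are coherent along $p$.

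\textbf{Key steps.} First, the forth step is trivial. Given that the finite tuple $\bar{x}$ of variables of $\mathcal{A}^p$ is matched with $\bar{c}$ where $\bar{a}\cong\bar{c}$ ($\bar{a}$ the tuple represented by $\bar{x}$ along $p$), and a new variable $x_j$, the element represented by $x_j$ extends $\bar{a}$. Applying an automorphism taking $\bar{a}$ to $\bar{c}$ gives the image of $x_j$. Equivalently, we can simply map each $\bar{x}$ to the tuple it represents: atomic types agree, so this is an embedding $\mathcal{A}^p\to\mathcal{A}$. The real content is surjectivity up to isomorphism, i.e.\ the back step.

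Second, the back step uses genericity together with \ELL-rank. Suppose $\bar{x}$, represented by $\bar{a}$ via some $\nu\prec p$, is matched with $\bar{c}\cong\bar{a}$, and we are given $d\in\mathcal{A}$. Using an automorphism, find $d'$ with $\bar{a},d'\cong\bar{c},d$. By Proposition~\ref{prop5}(2) applied to the tuple $\bar{a},d'$, there are $\beta<\alpha$ and $\bar{b}$ such that $\bar{a},d',\bar{b}\equiv_\beta\bar{a}'',e,\bar{b}'$ implies $\bar{a},d'\cong\bar{a}'',e$. Take a node $\tau\succ\nu$ representing $\bar{a},d',\bar{b}$ (it exists in $T_\mathcal{A}$, since $\nu$ represents a tuple containing $\bar{a}$; we may extend the tuple of $\nu$ rather than just $\bar{a}$, putting the extra elements of $\nu$ into the $\bar{b}$-part). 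By \ELL-$\alpha$-genericity there is $\tau'$ with $\nu\prec\tau'\prec p$ and $(\tau,\bar{x},\bar{y})\equiv_\beta(\tau',\bar{x},\bar{z})$. Translating via the stated correspondence between tree relations and relations on tuples, the tuple represented by $\bar{x},\bar{z}$ along $p$ is $\equiv_\beta$ to $\bar{a},d',\bar{b}$. Its first coordinates are $\bar{a}$ itself, since both nodes extend $\nu$. Hence the variable $z_1$ represents some $e$ with $\bar{a},e\cong\bar{a},d'\cong\bar{c},d$. So $\bar{x},z_1\mapsto\bar{c},d$ is a legal extension.

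Third, we interleave forth and back steps over enumerations of the variables and of $\mathcal{A}$, obtaining an isomorphism $\mathcal{A}^p\cong\mathcal{A}$.

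\textbf{Main obstacle.} The main difficulty is bookkeeping: ensuring that the partial isomorphism condition ``$\cong$'' is stated in terms that survive passage along $p$, and that the $\bar{b}$ witnessing Proposition~\ref{prop5} is chosen for the right tuple. In particular, $\nu$'s full tuple must be included so that the genericity clause, which fixes all variables of $\nu$, gives agreement on $\bar{x}$. I would handle this by always working with the full tuple of the current node $\nu\prec p$ (extending matched tuples to full prefixes via forth steps first), so that $\bar{x}$ is exactly the variables of $\nu$.
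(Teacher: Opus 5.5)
Your proposal is correct and follows essentially the same route as the paper. The paper also uses the family of finite partial isomorphisms $\bar{x}\mapsto\bar{a}$ for which the tuple represented by $\bar{x}$ along $p$ lies in the automorphism orbit of $\bar{a}$. It gets the forth step from an automorphism, and it gets the back step by applying Proposition~\ref{prop5}(2) to the represented tuple together with the pulled-back new element, taking $\tau\succ\nu$ that represents this tuple plus the witness $\bar{b}$, and using \ELL-$\alpha$-genericity to find $\tau'\prec p$.

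Your care about the domain being the full variable set of some $\nu\prec p$ addresses a point the paper's proof passes over silently, and either of your two fixes handles it.
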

	
	\begin{proof}
		
		Since the notion of an \ELL-$\alpha$-generic path is invariant under isomorphisms, we may assume that $T = T_{\mathcal{A}}$. Let $\mathcal{F}$ be the set of finite partial isomorphisms $f$ such that for some $\sigma\prec p$, with variables $\bar{x}$, $f$ maps $\bar{x}$ to $\bar{a}\in\mathcal{A}$ and $\sigma$ represents a tuple in the orbit of $\bar{a}$. We show that $\mathcal{F}$ has the back-and-forth property. Clearly, $\mathcal{F}$ is non-empty. 
		
		\medskip 
		\noindent
		\textbf{Forth}: Suppose $\bar{x}\rightarrow\bar{a}$ is in $\mathcal{F}$. Say $\sigma\prec p$ has variables $\bar{x}$ and represents a tuple $\bar{a}'$ such that $\bar{a}'\cong \bar{a}$. Let $y$ be a variable not in $\bar{x}$---for simplicity, let it be the next variable. Let $\sigma'$ be the successor of $\sigma$ with $\sigma'\prec p$. Then $\sigma'$ represents some $\bar{a}',c'$, and the automorphism taking $\bar{a}$ to $\bar{a}'$ takes some $c$ to $c'$. Then $\bar{x},y\rightarrow \bar{a},c$ is in $\mathcal{F}$. 
		
		\medskip
		\noindent
		\textbf{Back}: Suppose $f\in\mathcal{F}$, where $f$ takes $\bar{x}$ to $\bar{a}$. Say $\sigma\prec p$ has variables $\bar{x}$ and represents a tuple $\bar{a}'$ in the orbit of $\bar{a}$. Let $c$ be an element of $\mathcal{A}$ not in $\bar{a}$. We have $c'$ such that $\bar{a},c\cong\bar{a}',c'$. There exist $\bar{b}'$ and $\beta < \alpha$ such that $\bar{a}',c',\bar{b}'\equiv_\beta \bar{a}'',c'',\bar{b}''$ implies $\bar{a}',c'\cong \bar{a}'',c''$. Take $\tau$, with variables $\bar{x},y,\bar{u}$ such that $\sigma\prec\tau$, and $\tau$ represents $\bar{a}',c',\bar{b}'$. By \ELL-$\alpha$-genericity, there exists $\tau'$ with $\sigma \prec \tau'\prec p$ with variables including $\bar{x}$ and some $z$ and $\bar{v}$ such that $(\tau,\bar{x},y,\bar{u})\equiv_\beta (\tau',\bar{x},z,\bar{v})$. Then $\tau'$ represents a tuple that includes $\bar{a}'$ and some $c'',\bar{b}''$ corresponding to $z$ and $\bar{v}$, and $\bar{a},c\cong\bar{a}',c''$. Therefore, the function $f'$ taking $\bar{x},z$ to $\bar{a},c$ is in $\mathcal{F}$. 
	\end{proof} 
	
	\begin{remark}\label{rmk:getGeneric} 
		By examining the alternations of quantifiers present in the definition of \ELL-$\alpha$-generic, we can bound the complexity of an operator that, given $T\in\mathbb{T}$, produces an $\alpha$-generic path. If $\alpha$ is an $X$-computable limit ordinal, then we may take the operator to be $X$-effective $\Delta^0_\alpha$, and if $\alpha = \beta+1$ is an $X$-computable successor ordinal, then we may take the operator to be $X$-effective $\Delta^0_{2\beta+1}$. 
	\end{remark}
	
	\section{Proof of Theorem \ref{Bounding}} \label{sec:mainproof}
	
	In this section, we prove Theorem \ref{Bounding}. We recall the statement below. 
	
	\bigskip
	\noindent
	\textbf{Theorem \ref{Bounding}.} Let $X\subseteq\omega$, and suppose $\alpha$ is an $X$-computable ordinal. Then there is an $X$-effective $\Delta^0_\alpha$ operator that, for structures $\mathcal{A}$ of \ELL-rank at most $\alpha$, takes copies of $T_\mathcal{A}$ to copies of $\mathcal{A}$.

	\bigskip
	
	We split the proof of Theorem \ref{Bounding} into the following cases: 
	
	\begin{enumerate}
		
		\item $\alpha = 1$, 
		
		\item $\alpha = n > 1$ finite, 
		
		\item $\alpha$ is a limit ordinal, 
		
		\item $\alpha = \lambda + n$, where $\lambda$ is a limit ordinal and $n\geq 1$ is finite. 
		
	\end{enumerate}
	
	We begin with the case $\alpha = 1$, which is relatively simple but illustrates the general method. We then follow with the case of $\alpha = n > 1$ finite, which is the key case. The case of $\alpha$ a limit ordinal is easy, since $2\alpha = \alpha$. Finally, the method for finite $n$ can easily be adapted to the case $\alpha = \lambda + n$.
	
	\subsection{The case $\alpha = 1$}
	
	In this subsection, we prove Theorem 1.1 in Case 1, where $\alpha = 1$.
	
	\begin{case}
		There is a Turing computable operator $\Phi$ such that if $T\cong T_\mathcal{A}$ and $\mathcal{A}$ is of \ELL-rank at most $1$, then $\Phi(T)\cong \mathcal{A}$.
	\end{case}
	\begin{proof}
		
		It is enough to give an effective procedure that, given such a $T$, produces an \ELL-$1$-generic path $p$. Say that $T\cong T_\mathcal{A}$, where $\mathcal{A}$ has \ELL-rank at most $1$. By Proposition \ref{genericpath}, $\mathcal{A}^p\cong\mathcal{A}$. It is clear that we can pass effectively from $p$ to the atomic diagram of $\mathcal{A}^p$---the label on the node at level $n$ gives the atomic type of $(x_1,\ldots,x_n)$.
		
		Computably in $T$, we can list the pairs $(\nu,\tau)$ such that $\nu\prec\tau$. We satisfy the following requirements:
		\begin{enumerate}
			\item[\textbf{$R_n$}:] For the $n^{th}$ pair $(\nu,\tau)$ with $\nu\prec\tau$, where $\nu$ has variables $\bar{x}$ and $\tau$ has variables $\bar{x},\bar{y}$, and $\tau$ assigns the atomic type $D(\bar{x},\bar{y})$, if $\nu\prec p$, then there exists $\tau'$, with variables including $\bar{x}$ and some $\bar{z}$, such that $\nu\prec\tau'\prec p$ and $(\tau,\bar{x},\bar{y})\equiv_0(\tau',\bar{x},\bar{z})$. 
		\end{enumerate} 
		
		At each stage $s$, we have determined a node $\sigma_s$ on $p$. We satisfy the first requirement that needs attention. At stage $s+1$, consider requirements $R_n$ for $n\leq s+1$, and $(\nu,\tau)$ among the first $s$ on our list. The requirement needs attention if $\nu\prec\sigma_s$, and there is no $\tau'\prec \sigma_s$ witnessing the appropriate atomic type. If we do not find a requirement needing attention, then $\sigma_{s+1}$ is just some successor of $\sigma_s$. Assuming that there is a requirement $R_n$ needing attention, we take the first. There is some extension $\tau''$ of $\sigma_s$ satisfying the requirement, since $\tau$ and $\sigma_s$ extend to paths that yield copies of $\mathcal{A}$. Knowing that there is a node satisfying $\tau''$, we can effectively find one, and we let this be $\sigma_{s+1}$. The resulting path $p$ satisfies the witnessing requirements, and is therefore \ELL-$1$-generic.
	\end{proof} 
	
	\subsection{The case $\alpha$ finite, greater than $1$} 
	
	In this subsection, we prove Theorem 1.1 in Case 2, where $\alpha$ is finite, greater than $1$. 
	
	\begin{case}
		
		For each finite $n > 1$, there is an effective $\Delta^0_n$ operator that, for all $\mathcal{A}$ of \ELL-rank at most $n$, maps copies of $T_\mathcal{A}$ to copies of $\mathcal{A}$. 
	\end{case}
	
	\begin{proof}
		
		We describe an effective $\Delta^0_n$ procedure that, for each $T\cong T_{\mathcal{A}}$ with $\mathcal{A}$ of \ELL-rank at most $n$, gives a structure $\mathcal{B}$ (with universe $\omega$) that is isomorphic to $\mathcal{A}^p$ for some \ELL-$n$-generic path $p$. We use a worker construction, with $n$ workers, where for $1\leq k\leq n$, Worker $k$ has oracle $\Delta^0_k(\Delta^0_n(T)) = \Delta^0_{n + k-1}(T)$. In describing the construction, we suppress the oracle $T$. 
		We have already defined $\equiv_k$ on pairs $(\sigma,\bar{x})$, where $\sigma$ is a node in $T$ and $\bar{x}$ is among the variables of $\sigma$. As we will see in Fact \ref{fact} below, for $1\leq k \leq n$, Worker $k$ can compute $\equiv_{k-1}$, and for $1\leq k \leq n-2$, Worker $k$ can also compute $\equiv_k$. 
		
		\subsubsection{Workers and the metatheorem of Ash}
		\label{rem:new}
		
		Ash proved a metatheorem, with conditions on some abstract objects, guaranteeing the existence of a c.e.\ set that satisfies some high-level requirements. There are a number of applications of the metatheorem in computable structure theory. Although the metatheorem does not require it, most applications use the standard asymmetric back-and-forth relations. While the original proof of the metatheorem did not use workers, the proof in \cite{AK00} does. We say a little about the proof from \cite{AK00}, in the case where there are just $n$ workers, since our proof will follow a similar outline. For $1\leq k\leq n$, Worker $k$ uses the oracle $\Delta^0_k$; throughout, the construction may be relativized. Worker $n$ determines an infinite sequence $\ell_0u^n_1\ell^n_1u^n_2\ell^n_2\ldots\ $ that is a path through a c.e.\ ``alternating'' tree $P$, where the terms $u^n_s$ are given by a $\Delta^0_n$ ``instruction'' function. There is a computable ``enumeration'' function $E$ that, for each possible $\ell$, enumerates a set. The goal of the construction is for $\cup_s E(\ell^n_s)$ to be c.e. 
		
		For $1\leq k < n$, Worker $k$ produces a sequence, based on guesses at the sequence produced by Worker $k+1$. The sequence $\ell^n_s$ produced by Worker $n$ is a subsequence of that produced by Worker $1$. For $k\leq n$, Worker $k$ preserves $\leq_{k-1}$; that is, $\ell^k_m\leq_{k-1}\ell^k_{m+1}$. Since $\ell\leq_k\ell'$ implies $\ell\leq_{k-1}\ell'$, and $\ell\leq_0\ell'$ implies $E(\ell)\subseteq E(\ell')$, the set enumerated by Worker $n$ is the same as that enumerated by Worker $1$. Information is passed from level $n$ down to level $1$ in a way that preserves the full history. Suppose Worker $n$ has produced $\sigma\ell^nu$. For $1\leq k\leq n-1$, Worker $k$ passes this information down, adding to the history its own most recent $\ell^k$, which satisfies $\ell^{k+1}\leq_k\ell^k$. Eventually, Worker $1$, having determined $\ell^1$, will see the top level sequence $\sigma\ell^nu$ together with the history 
		\[ \ell^n \leq_{n-1} \ell^{n-1} \leq_{n-2} \cdots \leq_2 \ell^2 \leq_1 \ell^1.\]
		Worker $1$ must find $\ell$ such that $\sigma\ell^nu\ell$ is in $P$, and for $1\leq k < n$, $\ell^k \leq_{k-1} \ell$. Ash's conditions guarantee that such an $\ell$ exists. Worker $1$ can find one since $P$ and the back-and-forth relations are all c.e. 
		
		\subsubsection{What is new in our proof?}
		
		We indicate what is the same and what is different in our proof. For simplicity, we treat the oracle of Worker $1$ as computable, although it is actually $\Delta^0_n(T)$. Our objects satisfy the hypotheses of Ash's metatheorem, except that the back-and-forth relations are not all c.e. Our instruction function gives a generic path through $T$, and $\cup_s E(\ell^n_s)$ is the atomic diagram of a structure $\mathcal{B}$ isomorphic to $\mathcal{A}^p$. Our back-and-forth relations are the symmetric ones, but they satisfy Ash's four conditions. If we try to proceed exactly as in the proof from \cite{AK00}, then as above, we would have $\sigma\ell^n u$, produced by Worker $n$, and $\ell^k$ for $1\leq k < n$, produced by Worker $k$, such that 
		\[ \ell^n \equiv_{n-1} \ell^{n-1} \equiv_{n-2} \cdots \equiv_2 \ell^2 \equiv_1 \ell^1.\]
		Since Ash's four conditions are satisfied, there is some $\ell$ such that $\sigma\ell^nu\ell\in P$, and $\ell^k \equiv_{k-1} \ell$ for $1\leq k<n$. The difficulty is that Worker $1$ needs to find $\ell$, and the relations $\equiv_k$ are not all c.e. 
		
		The new idea in our proof is that, instead of just passing down the instruction $u$, adding to the history, and leaving to Worker $1$ the responsibility of determining $\ell$, the workers modify the instruction.  The relations $\equiv_k$ and $\subseteq$ are defined on all steps---including the modified instructions, which we call \emph{requests}.  Worker $n$ transforms the instruction $u$ into a request $u^n$.  Worker $n-1$, having arrived at step $\ell^{n-1}$ before correctly and stably guessing the steps of Worker $n$ up to $\ell^nu^n$, produces the modified request $u^{n-1}\supseteq u^n$.  In general, for $2\leq k \leq n-1$, Worker $k$, having arrived at $\ell^k$ before correctly and stably guessing the sequence of steps of Worker $k+1$ up to $u^{k+1}$, modifies it to produce $u^k\supseteq u^{k+1}$. For $\ell$ to fulfill a request $u^k$ means that $\ell\supseteq u^k$.  Worker $1$, having correctly and stably guessed the steps of Worker $2$ up to $\ell^2u^2$, finds $\ell$ fulfilling the request $u^2$, and this also fulfills the requests $u^k$ for $2 < k\leq n$.    
		
In modifying the request $u^n$, Worker $n-1$ uses just $\equiv_{n-2}$ and $\subseteq$.  For $2 \leq k \leq n-2$, Worker $k$ determines $u^k$ using just $\equiv_{k-1}$ and $\equiv_k$.  Worker $1$ does not issue requests, but uses $\equiv_0$ and $\equiv_1$ in fulfilling the request $u^2$.\footnote{In the case $n=2$, Worker $1$ and Worker $n-1$ are the same. In this case, Worker $1$ uses $\equiv_0$ and $\subseteq$.  Several aspects of the proof below must be modified similarly in the case $n = 2$.}  This is the essential innovation that allows our construction to work.  The symmetric back-and-forth relations were forced on us because of the requests.  To indicate why this is, we consider how Worker $k$ determines a request.  Say that Worker $k$ has obtained $\ell^k$ and guessed $\ell^{k+1}u^{k+1}$.  We will have $\ell^{k+1}\equiv_k\ell^k$ and $\ell^{k+1}\equiv_k u^{k+1}$.  The request $u^k$ is determined in two steps.  First, find $\ell^*\supseteq \ell^{k+1}$ such that $\ell^k\equiv_{k-1} \ell^*$.  Then find $u^k\supseteq u^{k+1}$ such that $\ell^*\equiv_{k-1} u^k$.  We will have $\ell^k\equiv_{k-1} u^k$.  If we try to use the asymetric back-and-forth relations, with $\ell^{k+1}\leq_k\ell^k$ and $\ell^{k+1}\leq_k u^{k+1}$, we get stuck trying to find $u^k\supseteq u^{k+1}$ such that $\ell^k\leq_{k-1} u^k$. 
		
		\subsubsection{What does each worker do?}
		
		What we said above was very general. We must say more precisely what our workers do, given $T\cong T_\mathcal{A}$ for $\mathcal{A}$ of \ELL-rank at most $n$, to produce $\mathcal{B}\cong \mathcal{A}$. Worker~$1$ enumerates the diagram of $\mathcal{B}$. Worker $n$ produces an \ELL-$n$-generic path $p$ through $T$ and an isomorphism $F$ from $\mathcal{B}$ onto $\mathcal{A}^p$. Worker $n$ satisfies the following witnessing requirements.

\bigskip
\noindent
\textbf{Requirement $W_{(\nu,\tau)}$}:  For $\nu\prec\tau$ in $T$, where $\nu$ has variables $\bar{x}$ and $\tau$ has variables $\bar{x},\bar{y}$, if $\nu\prec p$, then $\nu$ extends to some $\tau'\prec p$ with variables including $\bar{x}$ and some $\bar{z}$ such that $(\tau,\bar{x},\bar{y})\equiv_{n-1}(\tau',\bar{x},\bar{z})$. 
		
\bigskip
Recall that the universe of $\mathcal{A}^p$ is the set $X = \{x_1,x_2,\ldots\ \}$. The atomic diagram of $\mathcal{A}^p$ is given by the labels on the nodes of the path $p$. The structure $\mathcal{B}$ is $\Delta^0_n$, while the path $p$ and the isomorphism $F$ are $\Delta^0_{2n-1}$ (all uniformly in $T$). Each worker produces a sequence of \emph{completed steps} of the form $\ell = (\sigma,g)$, where $\sigma$ is a node of $T$ that might lie on the path $p$, and $g$ is a finite $1-1$ function that might be a part of $F$, so that $g$ maps a tuple $\bar{b}$ in the universe of $\mathcal{B}$ to the variables of $\sigma$. If $\sigma$ is at level $k$ of $T$, then the variables are $x_1,\ldots,x_k$. The label on $\sigma$ assigns an atomic type to $\bar{x}$, and, therefore, to $\bar{b}$. The node $\sigma$, when thought of as part of a generic path $p$, also determines higher-complexity types. 
		
		\subsubsection{The c.e.\ set $\mathbb{L}$}
		
		Let $\mathbb{L}$ be the set of pairs $\ell = (\sigma,g)$ such that $\sigma\in T$ and $g$ is a finite $1-1$ function mapping some tuple $\bar{b}$ in $\omega$ (the universe of $\mathcal{B}$) onto the variables $\bar{x}$ of $\sigma$. The elements of $\mathbb{L}$ are the possible completed steps. There is a natural notion of extension on $\mathbb{L}$. For $\ell = (\sigma,g)$ and $\ell' = (\sigma',g')$, we write $\ell\subseteq\ell'$ if $\sigma\preceq\sigma'$ and $g\subseteq g'$. Below, we define back-and-forth relations $\ell\equiv_k\ell'$ on $\mathbb{L}$, with the intended meaning that the $\equiv_k$-class assigned by $\ell$ to a tuple $\bar{b}$ in the universe of $\mathcal{B}$ matches that assigned by $\ell'$. We have already defined relations $(\sigma,\bar{x})\equiv_k(\tau,\bar{y})$, where $\sigma,\tau\in T$ and $\bar{x}$, $\bar{y}$ are tuples of variables of the same length, with $\bar{x}$ included among the variables of $\sigma$ and $\bar{y}$ included among the variables of $\tau$. We use these relations to define relations $\equiv_k$ on $\mathbb{L}$. 
		
		\begin{defn}\label{def-equiv-g}
			
			Let $\ell = (\sigma,g)$ and $\ell' = (\sigma',g')$ be elements of $\mathbb{L}$ such that $\dom(g)\subseteq \dom(g')$. Say that $\dom(g) = \bar{b}$, $g(\bar{b}) = \bar{x}$, and $g'(\bar{b}) = \bar{y}$. 
			Then $\ell\equiv_k\ell'$ if $(\sigma,\bar{x})\equiv_k (\sigma',\bar{y})$. 
			
		\end{defn}
		
		\noindent
		\textbf{Remark}: Despite the notation, the relation $\equiv_k$ is not symmetric; the right-hand side may involve a function with strictly larger domain. 
		
		\subsubsection{Issuing and fulfilling requests}
		
		Worker $1$ produces just the sequence of completed steps. For $2\leq k\leq n$, Worker $k$ alternates completed steps with ``requests,'' of the form $(\tau,h,\bar{v})$, where $\tau\in T$, $h$ is a finite $1-1$ function taking a tuple $\bar{b}$ from $\omega$ to a tuple $\bar{x}$ among the variables of $\tau$, and $\bar{v}$ consists of the remaining variables of $\tau$. In issuing the request, Worker $k$ is asking the worker below for help in finding appropriate pre-images for the variables in $\bar{v}$. Let $\mathbb{U}$ be the set of these possible requests. 
We broaden our use of $\subseteq$ to include pairs from $\mathbb{U}\times\mathbb{U}$, and our use of the notations $\subseteq$, $\equiv_k$ to include pairs from $\mathbb{L}\times\mathbb{U}$ and $\mathbb{U}\times\mathbb{L}$. Suppose that $u = (\tau,h,\bar{v})$ and $u' = (\tau',h',\bar{v}')$ are in $\mathbb{U}$, and that $\ell = (\sigma,g)\in\mathbb{L}$. 
		
		\begin{enumerate}
			
			\item $u\subseteq u'$ if $\tau\preceq\tau'$ and $h\subseteq h'$.
			
			\item $\ell\subseteq u$ if $\sigma\preceq\tau$ and $g\subseteq h$, and for $k\leq n-1$, $\ell\equiv_k u$ if $\dom(g)\subseteq \dom(h)$, where $g$ maps $\bar{b}$ to $\bar{x}$, $h$ maps $\bar{b},\bar{b}'$ to $\bar{y},\bar{y}'$ such that $(\sigma,\bar{x})\equiv_k (\tau,\bar{y})$. 
			
			\item $u\subseteq\ell$ if $\tau\preceq\sigma$ and $g$ extends $h$ so as to include $\bar{v}$ in $\ran(g)$. For $k < n$, $u\equiv_k\ell$ if $\dom(h)\subseteq \dom(g)$, where $h$ maps $\bar{b}$ to the tuple $\bar{x}$ consisting of variables of $\tau$ not in $\bar{v}$, and $g$ maps $\bar{b},\bar{b}'$ to $\bar{x}',\bar{v}'$ such that $(\tau,\bar{x},\bar{v})\equiv_k(\sigma,\bar{x}',\bar{v}')$. 
			
		\end{enumerate} 
		
		\begin{defn}
			
			The request $u = (\tau,h,\bar{v})\in\mathbb{U}$ is ``fulfilled'' by $\ell = (\sigma,g)\in\mathbb{L}$ if $u\subseteq\ell$, and $\ran(g)$ includes the full tuple of variables of $\sigma$. 
			
		\end{defn} 
		
		\smallskip
		
\noindent
		\textbf{Note}:  If $u\subseteq u'$ and $\ell$ fulfills $u'$, then $\ell$ also fulfills $u$.
		
		\subsubsection{Back-and-forth relations computed by a given worker} 
		
		\begin{fact}\label{fact}
			
			Worker $k$, for $1\leq k \leq n$, can compute $\equiv_{k-1}$. For $1\leq k \leq n-2$, Worker $k$ can also compute $\equiv_k$. All of the workers can compute $\subseteq$.
			
		\end{fact}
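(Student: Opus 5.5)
Worker $k$ has oracle $\Delta^0_{n+k-1}(T)$, i.e.\ it computes every set that is $\Sigma^0_{n+k-2}(T)$ or $\Pi^0_{n+k-2}(T)$. The plan is therefore to bound the arithmetical complexity of the relations $\equiv_j$ on pairs $(\sigma,\bar{x})$, and then transfer the bound to the extended relations on $\mathbb{L}$ and $\mathbb{U}$.

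\textbf{Step 1: bound $\equiv_j$ on pairs.} We show by induction on $j$ that the relation $(\sigma,\bar{x})\equiv_j(\tau,\bar{y})$ on $T$ is $\Pi^0_{2j}(T)$, uniformly in $j$.
\begin{itemize}
\item For $j=0$, the relation compares the labels of $\sigma$ and $\tau$ restricted to the chosen variables, which is computable from $T$.
\item For $j\geq 1$, since the relations decrease in strength ($\equiv_{j}$ implies $\equiv_{j'}$ for $j'<j$, by a routine induction), it suffices to quantify over $\beta=j-1$ only. The defining clause then has the form $\forall(\sigma',\bar{x}')\,\exists(\tau',\bar{y}')\,[\Pi^0_{2j-2}]$, together with the symmetric clause.
\item The relation $\succeq$ is computable in $T$, as noted in the paper, so each clause is $\Pi^0_{2j}(T)$.
\end{itemize}

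\textbf{Step 2: transfer to $\mathbb{L}$ and $\mathbb{U}$.} The relations $\equiv_j$ on $\mathbb{L}\times\mathbb{L}$, $\mathbb{L}\times\mathbb{U}$ and $\mathbb{U}\times\mathbb{L}$ are obtained from the relation on pairs after a computable decoding of $g$, $h$, $\bar{v}$, and some computable domain and containment checks. Hence they are also $\Pi^0_{2j}(T)$, and so computable in $\Delta^0_{2j+1}(T)$. Similarly, $\subseteq$ on $\mathbb{L}$ and $\mathbb{U}$ only needs $\preceq$, inclusion of finite functions, and a range check, so it is computable in $T$ and therefore in every worker's oracle.

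\textbf{Step 3: compare with the oracles.}
\begin{itemize}
\item For $\equiv_{k-1}$, Worker $k$ needs $2(k-1)+1\leq n+k-1$, i.e.\ $k\leq n$. This holds for all workers.
\item For $\equiv_k$, Worker $k$ needs $2k+1\leq n+k-1$, i.e.\ $k\leq n-2$. This is exactly the stated range.
\end{itemize}

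\textbf{Main obstacle.} The real content is the complexity bound in Step 1. One must check that the quantification over $\beta<j$ collapses to $\beta=j-1$, i.e.\ that $\equiv_j$ is monotone in $j$ on tree pairs, since otherwise the quantifier count could grow. One must also check that the existential search over extensions $\tau'\succeq\tau$ and variable tuples ranges over a set that is computable from $T$. Both are straightforward inductions, and everything else is bookkeeping.
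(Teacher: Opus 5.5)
Your proposal is correct and follows the paper's proof, which simply cites that $\equiv_k$ is $\Pi^0_{2k}$ (hence decidable from a $\Delta^0_{2k+1}$ oracle) and then makes the same comparisons $2(k-1)+1\leq n+k-1$ and $2k+1\leq n+k-1$; you additionally verify the $\Pi^0_{2j}$ bound and its transfer to $\mathbb{L}$ and $\mathbb{U}$, which the paper leaves implicit. One small correction to your ``main obstacle'': the collapse to $\beta=j-1$ via monotonicity is true but not needed, because for finite $j$ quantifying over the finitely many $\beta<j$ is just a finite conjunction of $\Pi^0_{2j}$ clauses, so the quantifier count cannot grow.
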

		
		\begin{proof}
			
			The relation $\equiv_k$ is $\Pi_{2k}$-definable, so it can be decided using oracle $\Delta^0_{2k+1}$. For $1\leq k\leq n$, Worker~$k$ has oracle $\Delta^0_{n+k-1}$. For Worker $k$ to compute $\equiv_{k-1}$ we need $2(k-1)+1\leq n+k-1$, i.e., $k\leq n$. For Worker $k$ to compute $\equiv_k$ we need $2k+1\leq n+k-1$, i.e., $k\leq n-2$.
		\end{proof}
		
		\begin{center}
			\begin{tabular}{|c|c|c|}
				\hline
				Worker & Oracle & Computes \\
				\hline
				$n$ & $\Delta^0_{2n-1}$ & $\subseteq$, $\equiv_{n-1}$, \ldots \\
				$n-1$ & $\Delta^0_{2n-2}$ & $\subseteq$, $\equiv_{n-2}$, \ldots \\
				$n-2$ & $\Delta^0_{2n-3}$ & $\subseteq$, $\equiv_{n-2}$, $\equiv_{n-3}$,\ldots \\
				$\vdots$ & $\vdots$ & $\vdots$ \\
				$k$ & $\Delta^0_{n+k-1}$ & $\subseteq$, $\equiv_k$, $\equiv_{k-1}$, \ldots \\
				$\vdots$ & $\vdots$ & $\vdots$ \\
				$2$ & $\Delta^0_{n+1}$ & $\subseteq$,$\equiv_2$, $\equiv_1$, $\equiv_0$ \\
				$1$ & $\Delta^0_n$ & $\subseteq$,$\equiv_1$, $\equiv_0$ \\
				\hline
			\end{tabular}
		\end{center}
		
		The above table assumes that $n$ is large enough to make the first two rows sensible.
		
		\subsubsection{Steps of each worker}
		
		The setup of the worker argument is much like that in the proof in \cite{AK00} of Ash's metatheorem. For $2\leq k\leq n$, Worker $k$ produces an alternating sequence 
		$\ell^k_0u^k_1\ell^k_1u^k_2\ldots $, where $\ell^k_m = (\sigma^k_m,g^k_m)$ is a completed step and $u^k_{m+1}$ is the next request. The first completed step is the same for all workers---$\ell^k_0 = (\lambda,\emptyset)$, where $\lambda$ is the root of $T$ and $\emptyset$ is the empty function.
		Worker $1$ produces just a sequence of completed steps $\ell^1_0\ell^1_1\ell^1_2\ldots$, where $\ell^1_m = (\sigma^1_m,g^1_m)$. It is Worker $1$ who builds the structure $\mathcal{B}$. Suppressing the bottom-level oracle, we say that $\mathcal{B}$ is computable---it is actually $\Delta^0_n(T)$. A completed step $\ell^1_s = (\sigma^1_s,g^1_s)$ determines a map $g^1_s$ sending a tuple $\bar{b}\in \mathcal{B}$ to the variables of $\sigma^1_s$, and $\sigma^1_s$ determines the atomic type of these. Worker $1$ enumerates the atomic diagram of $\mathcal{B}$. To maintain consistency, Worker $1$ must preserve $\equiv_0$ between successive completed steps.
		
		For each $k$, Worker $k+1$ obtains its completed steps by accepting certain completed steps of Worker $k$, so the completed steps of Worker $k+1$ form a subsequence of the completed steps of Worker $k$. The completed steps of Worker $n$, on top, are the ones of Worker $1$ that have been accepted all the way up. Worker $n$ controls the construction of an $\ELL$-$n$-generic path $p$ through $T$ and an isomorphism $F$ from $\mathcal{B}$ onto $\mathcal{A}^p$. Worker $n$ must preserve the extension relation between successive completed steps. Since Worker $n$ can compute $\equiv_{n-1}$, it can meet the genericity requirements. 
		
		Having a completed step $\ell_s^n = (\sigma^n_s,g^n_s)$, Worker $n$ determines an extension $\sigma$ of $\sigma^n_s$ satisfying the next ``genericity'' requirement. Since the structure $\mathcal{B}$ must be $\Delta^0_n$ relative to $T$, Worker $n$ needs help from the workers below. Worker $n$ makes a request of Worker $n-1$, who will make a request of Worker $n-2$, and so on. Eventually, Worker $1$ fulfills the request from Worker $2$, in a completed step. This completed step will then be accepted by Worker $2$. The same completed step fulfills the request from Worker $3$, who accepts it, and by Worker $4$, who also accepts it, and so on up until it is accepted by Worker $n$. 
		
		Worker $k$ cannot compute the sequence produced by Worker $k+1$. However, Worker $k$ can approximate this sequence, such that for each finite initial segment, the approximations are eventually correct. Worker $k$ will give some requests to Worker $k-1$ that are based on incorrect guesses, but for each request $u$ of Worker $k+1$, Worker $k$ eventually has a stable correct guess at the sequence of Worker $k+1$ that ends in $u$. Then Worker $k$ produces a request $u'\supseteq u$. Moreover, the steps of Worker $k$ based on incorrect guesses never go off the rails. While guessing, possibly incorrectly, a longer sequence of Worker $k+1$, Worker $k$ maintains consistency with the initial segments that are correct. 
		
		\subsubsection{What is preserved, and what is a ``reasonable'' guess?}
		
		Worker $n$ preserves $\subseteq$; we require $\ell^n_i\subseteq u^n_{i+1}\subseteq\ell^n_{i+1}$. For Worker $k$, where $2\leq k\leq n-1$, we require $\ell^k_m\equiv_{k-1}u^k_{m+1}\subseteq\ell^k_{m+1}$. For Worker $1$, we require $\ell^1_m\equiv_0\ell^1_{m+1}$. We will have an additional condition as well: If $\ell^n_i = (\sigma,g)$ and $u^n_{i+1} = (\tau,h,\bar{v})$, then $\dom(g) = \dom(h)$. For $2\leq k\leq n-1$, Worker $k$, once it has $\ell^k_m$, determines $u^k_{m+1}$ based on a guess at the sequence of steps of Worker $k+1$ up to the first unfulfilled request. Worker $1$, once it has $\ell^1_m$, determines $\ell^1_{m+1}$ based on a guess at the sequence of steps of Worker $2$ up to the first unfulfilled request. To avoid fatal mistakes, we require that the guesses be ``reasonable.'' 
		
		Suppose Worker $k$ has produced a sequence ending in $\ell^k_m$ and is guessing at the sequence produced by Worker $k+1$ up to the first unfulfilled request. The guess will have the form $\ell^{k+1}_0u^{k+1}_1\ell^{k+1}_1\ldots\ell^{k+1}_p u^{k+1}_{p+1}$, where $\ell^{k+1}_i = \ell^k_{m_i}$ for some $0 = m_0 < m_1 < \ldots < m_p\leq m$. This indicates that Worker $k$ believes it has fulfilled request $u^{k+1}_i$ in $\ell^k_{m_i}$, and, as a result, $\ell^k_{m_i}$ has been accepted as $\ell^{k+1}_i$.  For the guess to be ``reasonable,'' we require more.
		
		\begin{defn}\ 
			
			\begin{enumerate}
				
				\item For $k = n-1$ being \emph{reasonable} means that $\ell^n_i\subseteq\ell^{n-1}_j$ for $m_i\leq j\leq m$.  In addition, $\ell^n_p\subseteq u^n_{p+1}$, and if $\ell^n_i = (\sigma,g)$ and $u^n_{i+1} = (\tau,h,\bar{v})$, then $\dom(g) = \dom(h)$.
				
				\item For $1\leq k\leq n-2$ being \emph{reasonable} means that $\ell^{k+1}_i\equiv_k\ell^k_j$ for $m_i\leq j\leq m$. In addition, $\ell^{k+1}_p\equiv_k u^{k+1}_{p+1}$, and if $\ell^{k+1}_i = (\sigma,g)$ and $u^{k+1}_{i+1} = (\tau,h,\bar{v})$, then $\dom(g) = \dom(h)$.
				
			\end{enumerate}
			
		\end{defn} 
		
		\subsubsection{More on requests}
		
		We must say exactly how Worker $k$ defines $u^k_{m+1}$, for $2\leq k\leq n$.
		
		\begin{enumerate}
			
			\item Given $\ell^n_m = (\sigma,g)$, Worker $n$ locates the first
				witnessing requirement $W_{(\nu,\tau)}$ that needs attention at
				$\sigma$. Thus $\nu\preceq\sigma$, and, writing $\bar{x}$ for the
				variables of $\nu$ and $\bar{x},\bar{y}$ for those of $\tau$, there is
				no tuple $\bar{z}$ among the variables of $\sigma$ such that $(\tau,\bar{x},\bar{y})\equiv_{n-1}(\sigma,\bar{x},\bar{z})$.
				Worker $n$ finds an extension $\sigma'\succeq\sigma$, with variables
				including some $\bar{z}$, such that $(\tau,\bar{x},\bar{y})\equiv_{n-1}(\sigma',\bar{x},\bar{z})$.
				Let $\bar{v}$ consist of the variables of $\sigma'$ that are not in
				the range of $g$. Then $u^n_{m+1}=(\sigma',g,\bar{v})$.
			
			\item Given $\ell^{n-1}_m = (\sigma,g)$, Worker $n-1$ guesses the sequence of steps of Worker $n$ up to the first unfulfilled request, checking that the guess is reasonable. Say that this guess ends in $\ell^n_p u^n_{p+1}$, where $\ell^n_p = (\sigma',g')$ and $u^n_{p+1} = (\tau,h,\bar{v})$. By reasonableness, we have $\ell^n_p\subseteq\ell^{n-1}_m$ and $\ell^n_p\subseteq u^n_{p+1}$, and $\dom(g')=\dom(h)$. Say that $g'$ maps $\bar{b}$ to $\bar{x}$; then $h$ also maps $\bar{b}$ to $\bar{x}$. Say that $g$ maps $\bar{b},\bar{b}'$ to $\bar{x},\bar{y}$. Choose an extension $\tau'$ of $\tau$ and a function $h'\supseteq h$ with $\dom(h')=\dom(g)$ such that $h'$ maps $\bar{b},\bar{b}'$ to $\bar{x},\bar{z}$ and $(\sigma,\bar{x},\bar{y})\equiv_{n-2}(\tau',\bar{x},\bar{z})$. Let $\bar{v}'$ consist of the variables of $\tau'$ that are not in $\ran(h')$. Then the desired request is $u^{n-1}_{m+1}=(\tau',h',\bar{v}')$. \par (Why do such $\tau'$ and $h'$ exist? Recall that, relative to $\sigma$, $\bar{x}$ and $\bar{y}$ represent some elements of $\mathcal{A}$. Since $\sigma'\preceq\sigma$ and $\sigma'\preceq\tau$, relative to $\sigma$ and $\tau$ the tuple $\bar{x}$ represents the same elements. There is some extension of $\tau$ in which we find elements automorphic to those represented by $\bar{y}$ over the elements represented by $\bar{x}$. The node $\tau'$ is long enough to find such elements. Worker $n-1$ is not able to check automorphism equivalence, but it can check $\equiv_{n-2}$.)
			
			\item Given $\ell^k_m = (\sigma,g)$, where $2\leq k\leq n-2$, Worker $k$ guesses the sequence of steps of Worker $k+1$ up to the first unfulfilled request, checking that the guess is reasonable. Say that this guess ends in $\ell^{k+1}_p u^{k+1}_{p+1}$, where $\ell^{k+1}_p = (\sigma',g')$ and $u^{k+1}_{p+1} = (\tau,h,\bar{v})$. By reasonableness, we have $\ell^{k+1}_p\equiv_k\ell^k_m$ and $\ell^{k+1}_p\equiv_k u^{k+1}_{p+1}$, and $\dom(g')=\dom(h)$. Say that $g'$ maps $\bar{b}$ to $\bar{x}$, $h$ maps $\bar{b}$ to $\bar{y}$, and $g$ maps $\bar{b},\bar{b}'$ to $\bar{z},\bar{z}'$. Choose an extension $\tau'$ of $\tau$ and a function $h'\supseteq h$ with $\dom(h')=\dom(g)$ such that $h'$ maps $\bar{b},\bar{b}'$ to $\bar{y},\bar{y}'$ and $(\sigma,\bar{z},\bar{z}')\equiv_{k-1}(\tau',\bar{y},\bar{y}')$. Let $\bar{v}'$ consist of the variables of $\tau'$ that are not in $\ran(h')$. Then the desired request is $u^k_{m+1}=(\tau',h',\bar{v}')$. \par (Why do such $\tau'$ and $h'$ exist? By reasonableness, $(\sigma',\bar{x})\equiv_k(\sigma,\bar{z})$ and $(\sigma',\bar{x})\equiv_k(\tau,\bar{y})$, so $(\tau,\bar{y})\equiv_k(\sigma,\bar{z})$. One step of the back-and-forth relation gives an extension $\tau'$ of $\tau$ and a tuple $\bar{y}'$ such that $(\sigma,\bar{z},\bar{z}')\equiv_{k-1}(\tau',\bar{y},\bar{y}')$. Worker $k$ can check $\equiv_k$ and $\equiv_{k-1}$.)
			
			\item Given $\ell^1_m = (\sigma,g)$, Worker $1$ guesses the sequence of steps of Worker $2$ up to the first unfulfilled request, checking that the guess is reasonable. Say that this guess ends in $\ell^2_p u^2_{p+1}$, where $\ell^2_p = (\sigma',g')$ and $u^2_{p+1} = (\tau,h,\bar{v})$. By reasonableness, we have $\ell^2_p\equiv_1\ell^1_m$ and $\ell^2_p\equiv_1 u^2_{p+1}$, and $\dom(g')=\dom(h)$. Say that $g'$ maps $\bar{b}$ to $\bar{x}$, $h$ maps $\bar{b}$ to $\bar{y}$, and $g$ maps $\bar{b},\bar{b}'$ to $\bar{z},\bar{z}'$. Choose an extension $\tau'$ of $\tau$ and a function $h'\supseteq h$ with $\dom(h')=\dom(g)$ such that $h'$ maps $\bar{b},\bar{b}'$ to $\bar{y},\bar{y}'$ and $(\sigma,\bar{z},\bar{z}')\equiv_0(\tau',\bar{y},\bar{y}')$. Extend $\tau'$ to $\sigma^*$ and $h'$ to a function $g''$, if necessary, so that $\ran(g'')$ includes all of the variables of $\sigma^*$ and $\dom(g'')$ includes the first $m+1$ elements of $\omega$. Then $\ell^1_{m+1}=(\sigma^*,g'')$ fulfills $u^2_{p+1}$. (Worker $1$ does not issue a request.) \par (Why do such $\tau'$ and $h'$ exist? By reasonableness, $(\sigma',\bar{x})\equiv_1(\sigma,\bar{z})$ and $(\sigma',\bar{x})\equiv_1(\tau,\bar{y})$, so $(\tau,\bar{y})\equiv_1(\sigma,\bar{z})$. One step of the back-and-forth relation gives an extension $\tau'$ of $\tau$ and a tuple $\bar{y}'$ such that $(\sigma,\bar{z},\bar{z}')\equiv_0(\tau',\bar{y},\bar{y}')$. Worker $1$ can check $\equiv_1$ and $\equiv_0$.)
			
		\end{enumerate}
		
		After issuing a request $u^k_m$, Worker $k$ must wait until Worker $k-1$ has fulfilled it. Worker $k-1$ will produce a request, based on a correct stable guess at what Worker $k$ has done, and then wait for Worker $k-2$ to fulfill it. Worker $k-2$ will eventually guess what Worker $k-1$ has done and produce a request. The requests form a chain. Eventually, Worker $1$ has a stable correct guess at what Worker $2$ has done, and fulfills Worker $2$'s request. This also fulfills the requests of the workers above, so it is accepted as the next completed step for all.

			\subsubsection{Accepting completed steps}
			
			Suppose that Worker $k$ has so far produced completed steps
			$\ell^k_0,\ldots,\ell^k_m$, and that Worker $k+1$ has accepted the
			completed steps $\ell^{k+1}_0,\ldots,\ell^{k+1}_p$. Thus, for some
			$0=m_0<\cdots<m_p<m$, we have
			$\ell^{k+1}_i=\ell^k_{m_i}$ for $0\leq i\leq p$. Suppose that the
			current request of Worker $k+1$ is $u^{k+1}_{p+1}$. We describe when
			Worker $k+1$ accepts the as-yet-unaccepted completed step
			$\ell^k_m$.
			
			For $k\geq2$, Worker $k+1$ waits until it sees that, in issuing
			$u^k_m$, Worker $k$ used a correct and stable approximation to the oracle of
			Worker $k+1$; that is, a true stage in the sense of Lachlan. In particular, Worker $k$ must have correctly guessed
			the sequence
			$\ell^{k+1}_0u^{k+1}_1\ell^{k+1}_1\cdots
			\ell^{k+1}_pu^{k+1}_{p+1}$. The request produced from this guess
			satisfies $u^k_m\supseteq u^{k+1}_{p+1}$, and Worker $k$ accepts
			$\ell^k_m$ only when it fulfills $u^k_m$. Hence
			$\ell^k_m\supseteq u^k_m\supseteq u^{k+1}_{p+1}$, so
			$\ell^k_m$ fulfills the outstanding request of Worker $k+1$. At this
			point Worker $k+1$ accepts it, setting
			$\ell^{k+1}_{p+1}=\ell^k_m$. When $k=1$, the middle request is
			omitted: after a correct and settled approximation to the oracle of
			Worker $2$, Worker $1$ constructs its completed
			step so as to fulfill
			$u^2_{p+1}$ directly.
			
			Whenever Worker $k+1$ accepts a completed step of Worker $k$, Worker
			$k$ had a correct and settled approximation to the oracle of Worker
			$k+1$. Thus, from that point on, Worker $k$'s guess at Worker $k+1$'s
			completed steps will always begin with
			\[
			\ell^{k+1}_0u^{k+1}_1\ell^{k+1}_1\cdots
			u^{k+1}_{p+1}\ell^{k+1}_{p+1}.
			\]
			In particular, if $k\leq n-2$, then
			$\ell^{k+1}_{p+1}\equiv_k\ell^k_j$ for all $j\geq m$; if
			$k=n-1$, then $\ell^n_{p+1}\subseteq\ell^{n-1}_j$ for all
			$j\geq m$.

		\subsubsection{Filling the gaps and completing the proof} 
		
		We have described the construction somewhat informally, leaving a few gaps.  We now fill those gaps and show that the construction succeeds. We must show that for $1\leq k\leq n-1$, once Worker $k$ has determined $\ell^k_m$, there is always a reasonable guess---namely, the truth---on which Worker $k$ can base its next step. We must also show that whenever Worker $k$ acts on a reasonable guess, even if it was wrong, no harm is done.

			\begin{lem}\label{lem:true-state-reasonable}
				Let $1\leq k\leq n-1$, and suppose that Worker $k$ has determined its sequence of completed steps
				\[
				\ell^k_0,\ell^k_1,\ldots,\ell^k_m
				\]
				and the true current state of Worker
				$k+1$ is
				\[
				\ell^{k+1}_0u^{k+1}_1\ell^{k+1}_1\cdots
				\ell^{k+1}_pu^{k+1}_{p+1},
				\]
				where $\ell^{k+1}_i=\ell^k_{m_i}$ for
				$0=m_0<\cdots<m_p\leq m$.
				\begin{enumerate}
					\item The true current state of Worker $k+1$, through its
					first unfulfilled request, is a reasonable guess on which Worker
					$k$ can base its next action.
					\item Suppose that Worker $k$ acts on a reasonable guess.
					\begin{enumerate}
						\item If $k=1$, then the
						resulting completed step $\ell_{m+1}^1$ satisfies
						\[
						\ell^2_i\equiv_1\ell^1_{m+1}
						\quad\text{for every }i\leq p.
						\]
						\item If $2\leq k\leq n-2$, then the resulting request $u^k_{m+1}$ satisfies
						\[
						\ell^{k+1}_i\equiv_k u^k_{m+1}
						\quad\text{for every }i\leq p
						\]
						and thus the resulting completed step $\ell_{m+1}^k$ satisfies
						\[
						\ell^{k+1}_i\equiv_k \ell^k_{m+1}
						\quad\text{for every }i\leq p.
						\]
						\item If $k=n-1$, then the resulting request $u^{n-1}_{m+1}$ satisfies
						\[
						\ell^n_i\subseteq u^{n-1}_{m+1}
						\quad\text{for every }i\leq p
						\]
						and thus the resulting completed step $\ell_{m+1}^k$ satisfies
						\[
						\ell^n_i\subseteq \ell^{n-1}_{m+1}
						\quad\text{for every }i\leq p.
						\]
					\end{enumerate}
				\end{enumerate} 
			\end{lem}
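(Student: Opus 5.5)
We argue by simultaneous induction along the construction: on the stage at which Worker $k$ acts, and downward on $k$ from $n-1$ to $1$, so that the properties of requests made by Worker $k+1$ may be assumed when treating Worker $k$. Two basic facts about the relations on $\mathbb{L}\cup\mathbb{U}$ are needed. First, $\equiv_j$ is transitive and symmetric in the appropriate (domain-respecting) sense. This is inherited from the relations on pairs $(\sigma,\bar x)$, which mirror $\equiv_j$ on tuples of $\mathcal{A}$. Second, $\equiv_{j}$ implies $\equiv_{j-1}$, and $\subseteq$ implies $\equiv_j$ for every $j$ whenever the domains match. This holds because if $\sigma\preceq\tau$ and the variables agree, then both nodes represent the same tuple.

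For part (1), we check the clauses of reasonableness against the true state. The domain condition $\dom(g)=\dom(h)$ for $\ell^{k+1}_i,u^{k+1}_{i+1}$ is built in for Worker $n$, since $u^n_{m+1}=(\sigma',g,\bar v)$. For $k+1<n$, it has to be verified that requests are issued with this property. Since $h'$ is chosen with $\dom(h')=\dom(g)$, the natural fix is to require, in clauses (2) and (3) of the request definition, that domains be computed relative to the accepted step. I would check this against the construction rather than assume it. The condition $\ell^{k+1}_p\equiv_k u^{k+1}_{p+1}$ (respectively $\subseteq$ when $k=n-1$) is exactly the preservation requirement Worker $k+1$ imposes on its own sequence. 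The key clause is $\ell^{k+1}_i\equiv_k\ell^k_j$ for $m_i\le j\le m$ (respectively $\subseteq$). This follows from part (2) applied inductively to Worker $k$'s earlier actions, together with the observation at the end of the acceptance subsection: once $\ell^k_{m_i}$ is accepted, all later guesses of Worker $k$ begin with the true prefix through $\ell^{k+1}_i$. Hence every later step was produced from a reasonable guess containing $\ell^{k+1}_i$, and part (2) at those earlier stages gives the relation. The base case $j=m_i$ is trivial by reflexivity.

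For part (2), the case $2\le k\le n-2$ is typical. The guess ends in $\ell^{k+1}_p=(\sigma',g')$ and $u^{k+1}_{p+1}=(\tau,h,\bar v)$, and the new request $u^k_{m+1}=(\tau',h',\bar v')$ satisfies $\tau\preceq\tau'$ and $h\subseteq h'$. Since $\ell^{k+1}_p\equiv_k u^{k+1}_{p+1}$, and extending $\tau$ to $\tau'$ while keeping the images of $\dom(h)$ fixed does not change the tuple of $\mathcal{A}$ represented, we get $\ell^{k+1}_p\equiv_k u^k_{m+1}$. For $i<p$, reasonableness gives $\ell^{k+1}_i\equiv_k\ell^{k+1}_p$, because both are $\equiv_k$-related to $\ell^k_m$ on the smaller domain. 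Transitivity then yields $\ell^{k+1}_i\equiv_k u^k_{m+1}$. The subsequent completed step $\ell^k_{m+1}$ is accepted only after it fulfills $u^k_{m+1}$, so $u^k_{m+1}\subseteq\ell^k_{m+1}$. Again $\subseteq$ preserves $\equiv_k$ on the old domain, giving the stated consequence.

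The case $k=1$ is identical, using $\tau\preceq\tau'\preceq\sigma^*$ and $h\subseteq h'\subseteq g''$. The case $k=n-1$ is purely combinatorial: $\ell^n_p\subseteq u^n_{p+1}\subseteq u^{n-1}_{m+1}$, and earlier $\ell^n_i\subseteq\ell^n_p$. The main obstacle is bookkeeping of domains. The relations $\equiv_k$ between $\mathbb{L}$ and $\mathbb{U}$ compare only the common domain, and the guess may include requests whose $h$ assigns different variables $\bar y$ from those assigned by $\ell^k_m$, namely $\bar z$. We need that transitivity is applied only on $\dom(g')$, where all three maps are defined, and this is exactly what the condition $\dom(g')=\dom(h)$ in reasonableness secures.
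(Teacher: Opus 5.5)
Your proposal is correct and is essentially the paper's argument. Part (1) follows from three things: Worker $k+1$'s preservation condition, the domain equality built into how requests are issued (the construction already chooses $\dom(h')=\dom(g)$ for the current completed step, so no fix is needed), and part (2) at earlier stages. Part (2) chains reasonableness through the guess's last completed step and last request, then uses that this request is contained in $u^k_{m+1}$, which is contained in $\ell^k_{m+1}$.

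One point the paper makes explicit, and you should too: in (2) the guess may run past the true state with unaccepted steps $\hat u^{k+1}_{p+1}\hat\ell^{k+1}_{p+1}\cdots\hat\ell^{k+1}_{p+q}\hat u^{k+1}_{p+q+1}$. So the chain must go through $\hat\ell^{k+1}_{p+q}$ and $\hat u^{k+1}_{p+q+1}$ rather than the true $\ell^{k+1}_p,u^{k+1}_{p+1}$, and the settled-prefix fact you cite in (1) is what ensures that the true $\ell^{k+1}_i$, $i\le p$, occur in the guess.
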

			
			\begin{proof}
				The proof is by simultaneous induction on the steps of the
				construction. At the start of the construction, all of these conditions hold. Now suppose that we are at some later point in the construction.
				
				For (1), when $k \leq n-2$, the preservation condition for Worker
				$k+1$ gives $\ell^{k+1}_p\equiv_k u^{k+1}_{p+1}$, while (2) gives, for each
				$i\leq p$ and each $j$ with $m_i\leq j\leq m$, $\ell^{k+1}_i\equiv_k\ell^k_j$. The
				functions in $\ell^{k+1}_i$ and $u^{k+1}_{i+1}$ have the same
				domain for every $i\leq p$ because of the way Worker $k+1$ issues its
				requests. These are exactly the conditions which say that the current true state is reasonable. If $k = n-1$ the argument is the same except that we use $\subseteq$ instead of $\equiv_k$.
				
				For (2), suppose that Worker $k$ acts on a reasonable guess. When Worker $k+1$ accepted the completed true stages through
				$\ell^{k+1}_p$, it was because Worker $k$ used correct and
				settled approximations to the oracle of Worker $k+1$. Thus every
				later approximation by Worker $k$ extends the settled true prefix
				\[
				\ell^{k+1}_0u^{k+1}_1 \cdots \ell^{k+1}_p.
				\]
				In particular, any later reasonable guess has the form
				\[
				\ell^{k+1}_0 \cdots \ell^{k+1}_p \hat{u}^{k+1}_{p+1} \hat{\ell}^{k+1}_{p+1}\cdots
				\hat{\ell}^{k+1}_{p+q}
				\hat{u}^{k+1}_{p+q+1}
				\]
				for some $q\geq 0$.
				By reasonableness, for every $i\leq p$,
				\[
				\ell^{k+1}_i\equiv_k \hat{\ell}_{p+q}^{k+1} \equiv_k \hat{u}^{k+1}_{p+q+1}
				\]
				when $k\leq n-2$, and
				\[
				\ell^n_i\subseteq \hat{\ell}_{p+q}^{k+1} \subseteq \hat{u}^n_{p+q+1}
				\]
				when $k=n-1$.
				We check the three cases.
				\begin{enumerate}
					\renewcommand{\labelenumi}{(\alph{enumi})}
					\item If $k=1$, Worker~$1$ constructs
					$\ell^1_{m+1}$ so as to fulfill $\hat{u}^2_{p+q+1}$. For every $i\leq p$,
					$\ell^2_i\equiv_1 \hat{u}^2_{p+q+1} \equiv_1 \ell^1_{m+1}$.
					\item If $2\leq k\leq n-2$, Worker $k$ constructs
					$u^k_{m+1}\supseteq \hat{u}^{k+1}_{p+q+1}$. Then
					$\ell^{k+1}_i\equiv_k u^k_{m+1}$ for every $i\leq p$.
					When $u^k_{m+1}$ is fulfilled by $\ell_{m+1}^k \supseteq u^k_{m+1}$ we have
					\[
					\ell^{k+1}_i\equiv_k\ell^k_{m+1}
					\quad\text{for every }i\leq p.
					\]
					\item Finally, if $k=n-1$, Worker $n-1$ constructs
					$u^{n-1}_{m+1}\supseteq \hat{u}^n_{p+q+1}$. By transitivity,
					\[
					\ell^n_i\subseteq u^{n-1}_{m+1}
					\quad\text{for every }i\leq p.
					\]
					When this request is fulfilled by $\ell_{m+1}^{n-1} \supseteq u_{m+1}^{n-1}$, transitivity again gives
					\[
					\ell^n_i\subseteq\ell^{n-1}_{m+1}
					\quad\text{for every }i\leq p.
					\]
				\end{enumerate}
				This gives (2).
			\end{proof}
			
			We can now argue that the construction works. We first verify that requests are passed down. We claim that, for
			$2\leq k\leq n$, whenever $\ell^k_m$ is defined, $u^k_{m+1}$ is
			defined. We argue by downward induction on $k$. For $k=n$, Worker $n$ always constructs a request. Suppose that
			$2\leq k<n$ and that the claim holds for Worker $k+1$. If
			$\ell^k_m$ is defined and $\ell^{k+1}_p$ is the true current completed
			step of Worker $k+1$, then $u^{k+1}_{p+1}$ is defined by the induction
			hypothesis. By Lemma~\ref{lem:true-state-reasonable}(1), the true
			current state of Worker $k+1$ through this request is reasonable.
			Hence Worker $k$ finds a reasonable guess and defines $u^k_{m+1}$.
			
			We next argue that whenever Worker $k+1$ makes a request
			$u^{k+1}_{p+1}$, Worker $k$ eventually fulfills it and Worker $k+1$
			accepts the resulting completed step. We prove this by induction on
			$k$. For the base case, fix a request $u^2_{p+1}$ of Worker $2$. At some point, Worker 1 has a correct and settled guess at the oracle of Worker $2$ through $u^2_{p+1}$. This guess is
			reasonable by Lemma~\ref{lem:true-state-reasonable}(1), so Worker $1$
			constructs a completed step that fulfills $u^2_{p+1}$. Because the
			guess was correct and settled, Worker $2$ accepts this completed step.
			
			Now suppose that $2\leq k\leq n-1$, and inductively that every
			request made by Worker $k$ is eventually fulfilled by Worker $k-1$
			and accepted by Worker $k$. In particular, whenever Worker $k$ issues
			a request, it eventually receives and accepts a completed step and can
			continue to act. Fix a request $u^{k+1}_{p+1}$ of Worker $k+1$. At some point, Worker $k$ has a correct and stable approximation of the state of Worker $k+1$. By
			Lemma~\ref{lem:true-state-reasonable}(1), this is a reasonable guess,
			and so Worker $k$ produces a request $u^k_{m+1} \supseteq u^{k+1}_{p+1}$. By the induction hypothesis, Worker $k-1$ eventually
			fulfills $u^k_{m+1}$. The resulting completed step of Worker $k$
			therefore also fulfills $u^{k+1}_{p+1}$. Since $u^k_{m+1}$ was issued
			from a correct and settled approximation to Worker $k+1$, Worker
			$k+1$ accepts that completed step.
			
			Thus the workers always make requests (other than Worker $1$) and eventually accept completed steps fulfilling those requests. In particular $\ell_m^k$ is defined for all $1 \leq k \leq n$ and all $m$. The sequence $\ell^1_m = (\sigma^1_m,g^1_m)$ is defined for all $m$, preserving $\equiv_0$, $\dom(g^1_m)$ is a tuple $\bar{b}$ that includes the first $m$ elements of $\omega$, and the label on $\sigma^1_m$ provides the atomic type of $\bar{b}$. Thus, Worker $1$ produces a well-defined structure $\mathcal{B}$ with domain equal to $\omega$. We have also seen that $\ell^n_m = (\sigma^n_m,g^n_m)$ is defined for all $m$, preserving $\subseteq$, and that for some $0 = s_0 < s_1 < \ldots$, $\ell^n_m = \ell^1_{s_m}$. Thus, Worker $n$ produces a path $p$ through $T$ and a function $F$ that is an isomorphism from $\mathcal{B}$ onto $\mathcal{A}^p$. Worker $n$ ensures that this path is $\ELL$-$n$-generic.
			
			We have described a uniform effective procedure which, for all $\mathcal{A}$ of $\ELL$-rank at most $n$ and $T\cong T_\mathcal{A}$, produces a structure $\mathcal{B}$, computable in $\Delta^0_n(T)$, such that $\mathcal{B}\cong\mathcal{A}^p$ for some $\ELL$-$n$-generic path $p$ through $T$.
	\end{proof} 
	
	\subsection{The case $\alpha$ a limit ordinal}
	
	In this subsection, we prove Theorem 1.1 in Case 3, where $\alpha$ is a limit ordinal.
	
	\begin{case}[$\alpha$ a limit ordinal]
		For each computable limit ordinal $\alpha$, there is an effective $\Delta^0_\alpha$ operator that, for all $\mathcal{A}$ of \ELL-rank at most $\alpha$, takes copies of $T_\mathcal{A}$ to copies of $\mathcal{A}$. 
	\end{case}
	
	\begin{proof}
		
		For $\mathcal{A}$ of \ELL-rank at most $\alpha$ and $T \cong T_{\mathcal{A}}$, we can apply a uniform $\Delta^0_\alpha$ procedure to find an \ELL-$\alpha$-generic path $p$ through $T$ (see Remark \ref{rmk:getGeneric}). From $p$, we pass effectively to $\mathcal{A}^p$. If $T\cong T_\mathcal{A}$, then this $\mathcal{A}^p$ is isomorphic to $\mathcal{A}$. 
	\end{proof} 
	
	\subsection{$\alpha = \lambda + n$ for $\lambda$ a limit ordinal and $n \geq 1$}
	
	In this subsection, we complete the proof of Theorem 1.1, with Case 4, where $\alpha = \lambda+n$ for $\lambda$ a limit ordinal and $n$ finite, with $n \geq 1$.
	
	\begin{case}
		If $\alpha = \lambda+n$, where $\lambda$ is a computable limit ordinal and $n\geq 1$ is finite, then there is an effective $\Delta^0_\alpha$ operator that, for $\mathcal{A}$ of \ELL-rank at most $\alpha$, takes each $T\cong T_\mathcal{A}$ to a copy of $\mathcal{A}$.
	\end{case}
	
	\begin{proof} [Sketch]
		
		We imitate the proof of Case 1 if $n=1$, and of Case 2 if $n\geq 2$. The imitation of Case 1 is straightforward. For Case 2, we use $n$ workers. For $1\leq k\leq n$, Worker $k$ has oracle $\Delta^0_k(\Delta^0_{\lambda+n}(T)) = \Delta^0_{\lambda+n+k-1}(T)$. This ensures that the analog of Fact \ref{fact} holds. Worker $n$ produces an \ELL-$\alpha$-generic path $p$ and an isomorphism $F$ from $\mathcal{B}$ onto $\mathcal{A}^p$. The important back-and-forth relations are between pairs $(\sigma,\bar{x})$, where $\sigma\in T$ and $\bar{x}$ is among the variables of $\sigma$. The completed steps of Worker $k$ have form $\ell^k_s = (\sigma^k_s,g^k_s)$, where $g^k_s$ maps a tuple from $\mathcal{B}$ to the variables $\bar{x}^k_s$ of $\sigma^k_s$. Worker $k$ preserves $\equiv_{\lambda + k-1}$; that is, for $\bar{x}$ included among the variables of $\sigma^k_s$, $(\sigma^k_s,\bar{x})\equiv_{\lambda + k-1}(\sigma^k_{s+1},\bar{x})$. 
		Worker $1$ builds $\mathcal{B}$. Worker $1$ makes sure that the first $s$ elements of $\mathcal{B}$ are included in $\dom(g^1_s)$. To determine the atomic type of the tuple consisting of the first $s$ elements of $\mathcal{B}$, Worker $1$ simply looks at the label on $\sigma^1_s$. 
	\end{proof}
	
	\section{Hardness}
	
	By relativizing the main result of the previous section, we obtain that for each $X$-computable ordinal $\alpha\geq 1$, there is an $X$-effective $\Delta^0_\alpha$ operator that for all $\mathcal{A}$ of Scott rank at most $\alpha$, takes copies of $T_\mathcal{A}$ to copies of $\mathcal{A}$. We believe that this is optimal; that is, for an ordinal $\alpha\geq 2$, and $1\leq \beta < \alpha$, there is no (boldface) $\bfDelta^0_\beta$ operator that, for all $\mathcal{A}$ of Scott rank at most $\alpha$, takes copies of $T_\mathcal{A}$ to copies of $\mathcal{A}$. The only method we know for showing non-existence of the $\bfDelta^0_\beta$ operator is, fixing a parameter $X$ making the operator $X$-effective $\Delta^0_\beta$, to give a structure $\mathcal{A}$ of Scott rank at most $\alpha$ such that $T_\mathcal{A}$ has an $X$-computable copy while $\mathcal{A}$ has no copy $\Delta^0_\beta$ relative to $X$. We can carry this out for finite levels.
	
	\begin{thm}
		For each $X \subseteq \mathbb{N}$ and $n \in \mathbb{N}$, there is a structure $\mathcal{A}$ of Scott rank $n+2$ such that $T_\mathcal{A}$ has an $X$-computable copy but $\mathcal{A}$ has no $X^{(n)}$-computable copy. Thus, there is no (boldface) $\bfDelta^0_{n+1}$ operator taking copies of $T_\mathcal{A}$ to copies of $\mathcal{A}$ working uniformly on all structures $\mathcal{A}$ of Scott rank at most $n+2$. 
	\end{thm}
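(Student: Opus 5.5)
We would combine the Harrison-Trainor--Montalb\'an example with the unfriendly jump inversion of \cite{unfriendly}. Relativize everything to $X$; we may take $X=\emptyset$. The base case is the structure of \cite{HTMontalban} (see the erratum \cite{errata}): a structure $\mathcal{C}$ of Scott rank $2$ such that $T_\mathcal{C}$ has a computable copy but $\mathcal{C}$ has no computable copy. The construction relativizes to any oracle $Y$. It gives $\mathcal{C}_Y$ of Scott rank $2$ with $T_{\mathcal{C}_Y}$ $Y$-computable and $\mathcal{C}_Y$ not $Y$-computable. We will apply this with $Y = X^{(n)}$, or more precisely with an uncomputable-enough presentation at that level.

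Next we use unfriendly jump inversion. Given a structure $\mathcal{C}$, it produces a structure $\mathcal{A}$ such that $\mathcal{A}$ has a $Z$-computable copy iff $\mathcal{C}$ has a $Z^{(n)}$-computable copy. The point of the ``unfriendly'' version is that the Scott rank goes up by exactly $n$, rather than by roughly $2n$ as with friendly pair-of-structures inversions. So $\mathcal{A}$ has Scott rank $2+n$. The theorem needs two further properties of this operation $\mathcal{C}\mapsto\mathcal{A}$.
\begin{itemize}
\item[(a)] It reflects copies downward: if $\mathcal{A}$ has an $X^{(n)}$-computable copy, then $\mathcal{C}$ has an $X^{(2n)}$-computable copy, or more precisely a copy computable in the appropriate jump. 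We calibrate the choice of $\mathcal{C}$ so that $\mathcal{C}$ fails to have a copy at exactly that level.
\item[(b)] It respects trees of tuples: if $T_\mathcal{C}$ has an $X^{(n)}$-computable copy, then $T_\mathcal{A}$ has an $X$-computable copy.
\end{itemize}
Property (b) is the step we expect to be hardest. The plan is to show that the unfriendly jump inversion is uniform enough that it can be applied ``inside'' the tree of tuples. Each node of $T_\mathcal{A}$ corresponds to a tuple of $\mathcal{A}$. Such a tuple lies in finitely many gadget components coding finitely many elements of $\mathcal{C}$, together with partial, $n$-jump-approximated information about their types. So the labeled tree $T_\mathcal{A}$ can be built from $T_\mathcal{C}$ by the same $\Delta^0_{n+1}$-to-computable approximation method that builds $\mathcal{A}$ from $\mathcal{C}$. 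We would try to verify that the construction in \cite{unfriendly} only queries the atomic diagram of $\mathcal{C}$ through finite tuples, in a way that can be read off a copy of $T_\mathcal{C}$. The delicate part is that $T_\mathcal{C}$ does not identify repeated elements across branches. To handle this, we expect to need the extra feature that the inversion works for arbitrary ``back-and-forth-trees'' presented as $\Sigma_{n+1}$ approximations, not just for structures.

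Given (a) and (b), the argument concludes as follows. Choose $\mathcal{C}$ of Scott rank $2$ with $T_\mathcal{C}$ having an $X^{(n)}$-computable copy and $\mathcal{C}$ having no $X^{(n)}$-computable copy in the form required by (a). Let $\mathcal{A}$ be the inversion of $\mathcal{C}$. Then $T_\mathcal{A}$ is $X$-computable, $\mathcal{A}$ has no $X^{(n)}$-computable copy, and $\operatorname{SR}(\mathcal{A})=n+2$. For the second sentence of the theorem, suppose a $\bfDelta^0_{n+1}$ operator existed. Fix $X$ making it $X$-effective $\Delta^0_{n+1}$. Applying it to the $X$-computable copy of $T_\mathcal{A}$ would yield an $X^{(n)}$-computable copy of $\mathcal{A}$, which is a contradiction.
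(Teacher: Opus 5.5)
Your architecture is the paper's: relativize the Harrison-Trainor--Montalb\'an example, apply the $n$th unfriendly jump inversion, and transfer the computable copy of the tree of tuples. But the jump levels in your (a) and (b) do not match, so the final step fails.

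\textbf{The calibration error.} You describe the inversion as ``$\mathcal{A}$ has a $Z$-computable copy iff $\mathcal{C}$ has a $Z^{(n)}$-computable copy,'' and you state (b) for an $X^{(n)}$-computable $T_\mathcal{C}$. Yet (a), correctly, only says that an $X^{(n)}$-computable copy of $\mathcal{A}$ yields an $X^{(2n)}$-computable copy of $\mathcal{C}$.
\begin{itemize}
\item To contradict (a) you need $\mathcal{C}$ with no $X^{(2n)}$-computable copy. Your $\mathcal{C}$, the example relativized to $X^{(n)}$, is only known to lack $X^{(n)}$-computable copies.
\item No Scott-rank-$2$ $\mathcal{C}$ can meet both demands when $n\geq 1$. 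By Theorem~\ref{Bounding} relativized to $X^{(n)}$, an $X^{(n)}$-computable copy of $T_\mathcal{C}$ yields an $X^{(n+1)}$-computable copy of $\mathcal{C}$, hence an $X^{(2n)}$-computable one.
\item A $\mathcal{C}$ of higher Scott rank would not give the bound $n+2$ for $\mathcal{A}$ via Theorem~\ref{thm:unfriendly}(3).
\end{itemize}

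\textbf{The missing idea.} The inversion is ``unfriendly'' because of an asymmetry. In the forward direction it absorbs $2n$ jumps: a $0^{(2n)}$-computable copy of $\mathcal{B}$ gives a computable copy of $\Inv{n}{\mathcal{B}}$. Meanwhile Scott complexity rises only by $n$, and the reverse direction loses only $n$ jumps. So the paper relativizes the example to $0^{(2n)}$: $T_\mathcal{B}$ has a $0^{(2n)}$-computable copy and $\mathcal{B}$ has none. It then proves (b) in the form: if $T_\mathcal{B}$ has a $0^{(2n)}$-computable copy, then $T_{\Inv{n}{\mathcal{B}}}$ has a computable copy.

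\textbf{How (b) is actually proved.} This is not a ``$\Delta^0_{n+1}$-to-computable'' approximation, and no theory of inverting ``back-and-forth trees'' is needed.
\begin{itemize}
\item The edge relation read off a $0^{(2n)}$-computable $T_\mathcal{B}$ is $\Delta^0_{2n+1}$. Using a $\Sigma^0_{2n+1}$-complete $S$ and the uniform family $\mathcal{C}_i$, one computably builds, for each compatible pair $\sigma,\tau$ of nodes, a pair $(\mathcal{C}_{\sigma,\tau},\mathcal{D}_{\sigma,\tau})$ isomorphic to $(\mathcal{M},\mathcal{N})$ or $(\mathcal{N},\mathcal{M})$ according to the edge.
\item The copy of $T_{\Inv{n}{\mathcal{B}}}$ has entries $\langle\sigma,e\rangle$ and $\langle\sigma,\tau;p,r;e\rangle$. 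All $T_\mathcal{B}$-nodes occurring in a tuple must lie on a single path.
\item That single-path restriction is what handles your worry about identifying elements across branches.
\end{itemize}

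\textbf{Minor point.} Theorem~\ref{thm:unfriendly}(3) gives only Scott rank at most $n+2$. Exactness follows from Theorem~\ref{Bounding}: Scott rank at most $n+1$ would yield an $X^{(n)}$-computable copy of $\mathcal{A}$.
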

	
	Working relative to $X$ everywhere, we can suppress $X$. The proof is essentially just putting together two ingredients. Harrison-Trainor and Montalb\'{a}n \cite{HTMontalban} gave an example of a structure $\mathcal{B}$ such that $T_\mathcal{B}$ has a computable copy, but $\mathcal{B}$ has no computable copy. On inspection, this structure has Scott rank $2$. The construction can be relativized to give a structure $\mathcal{B}$ such that $T_\mathcal{B}$ has a $0^{(2n)}$-computable copy but $\mathcal{B}$ has no $0^{(2n)}$-computable copy. For simplicity we may assume that $\mathcal{B}$ is a graph.
	
	The other ingredient is the finite unfriendly jump inversions introduced by Harrison-Trainor in \cite{unfriendly}. From the structure $\mathcal{B}$, we can construct the $n$th unfriendly jump inversion $\Inv{n}{\mathcal{B}}$ of $\mathcal{B}$. We will take $\mathcal{A} = \Inv{n}{\mathcal{B}}$. We will need the formal definition only to prove one fact below.
	
	\begin{defn}[Harrison-Trainor]
		Let $S \subseteq \mathbb{N}$ be a complete $\Sigma^0_{2n+1}$ set. We fix computable structures $\mathcal{M} \ncong \mathcal{N}$ and a uniformly computable sequence of computable structures $\mathcal{C}_i$ such that
		\begin{enumerate}
			\item For each $i$,
			\[\mathcal{C}_i\cong\left\{\begin{array}{cc} \mathcal{M} & \mbox{if $i\in S$}\\
				\mathcal{N} & \mbox{otherwise}
			\end{array}\right.\]
			\item $\mathcal{N} \nleq_{n+1} \mathcal{M}$.
			\item $\mathcal{M}$ and $\mathcal{N}$ have $\Pi_{n+2}$ Scott sentences.
		\end{enumerate}
		Such structures exist, as shown in \cite{unfriendly}.
		Let $\mathcal{G}$ be a graph.\footnote{We could, with simple modifications, deal with a structure in any countable relational language.} Define $\Inv{n}{\mathcal{G}}$ to be the structure consisting of the vertices of $\mathcal{G}$, and associated to each unordered pair $\{u,v\}$ of distinct vertices, an ordered pair of structures isomorphic to $(\mathcal{M},\mathcal{N})$ if there is an edge $u - v$, and isomorphic to $(\mathcal{N},\mathcal{M})$ if there is no edge $u - v$. More formally, let $\mathcal{L}$ be the language of $\mathcal{M}$ and $\mathcal{N}$. We work in the language $\mathcal{L}^* = \mathcal{L} \cup \{U,E,F\}$ where $U$ is a unary relation and $E$ and $F$ are ternary relations. $\Inv{n}{\mathcal{G}}$ will be the structure defined as follows. The domain of $\Inv{n}{\mathcal{G}}$ will consist of the elements of $\mathcal{G}$ together with, for each pair of distinct $u,v \in \mathcal{G}$, disjoint infinite sets $C_{\{u,v\}}$ and $D_{\{u,v\}}$. $U$ will hold only of the elements of $\mathcal{G}$. For distinct $u,v$, we put $E(u,v,x)$ if and only if $x \in C_{\{u,v\}}$ and $F(u,v,x)$ if and only if $x \in D_{\{u,v\}}$; otherwise, $E$ and $F$ do not hold. The relations of $\mathcal{L}$ will hold only of tuples of elements all in the same set $C_{\{u,v\}}$ or $D_{\{u,v\}}$, and we will have $C_{\{u,v\}} \cong \mathcal{M}$ and $D_{\{u,v\}} \cong \mathcal{N}$ if there is an edge connecting $u$ and $v$, and $C_{\{u,v\}} \cong \mathcal{N}$ and $D_{\{u,v\}} \cong \mathcal{M}$ if there is no such edge.
	\end{defn}
	
	We will use the following properties of this jump inversion.
	
	\begin{thm}[Harrison-Trainor \cite{unfriendly}]\label{thm:unfriendly}{\ }
		\begin{enumerate}
			\item Given a $\mathbf{0}^{(2n)}$-computable copy of $\mathcal{B}$, there is a computable copy of $\Inv{n}{\mathcal{B}}$.
			\item Given a $\mathbf{0}^{(n)}$-computable copy of $\Inv{n}{\mathcal{B}}$, there is a $\mathbf{0}^{(2n)}$-computable copy of $\mathcal{B}$.
			\item If $\mathcal{B}$ has a $\Pi_\ell$ Scott sentence for $\ell \geq 2$, then $\Inv{n}{\mathcal{B}}$ has a $\Pi_{n+\ell}$ Scott sentence.
		\end{enumerate}
	\end{thm}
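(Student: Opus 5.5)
The plan is to prove the three items separately, since each uses a different one of the three properties of $\mathcal{M}$, $\mathcal{N}$, and $(\mathcal{C}_i)_{i}$ fixed in the definition of $\Inv{n}{\mathcal{G}}$. Throughout, the key point is that the edge relation of $\mathcal{B}$ is coded by which of $C_{\{u,v\}}$, $D_{\{u,v\}}$ is the copy of $\mathcal{M}$. Every argument relativizes, so we work unrelativized.

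For (1), take a $\mathbf{0}^{(2n)}$-computable copy of $\mathcal{B}$ with domain $\omega$. Its edge relation and its complement are both $\Delta^0_{2n+1}$, so in particular both are $\Sigma^0_{2n+1}$. Since $S$ is $\Sigma^0_{2n+1}$-complete, there are computable functions $i(u,v)$ and $j(u,v)$ such that:
\begin{itemize}
\item $i(u,v)\in S$ if and only if $u - v$ is an edge;
\item $j(u,v)\in S$ if and only if $u - v$ is not an edge.
\end{itemize}
We then build $\Inv{n}{\mathcal{B}}$ computably. The vertices form a computable set on which $U$ holds. For each pair $\{u,v\}$ we lay down a copy of $\mathcal{C}_{i(u,v)}$ as $C_{\{u,v\}}$ and a copy of $\mathcal{C}_{j(u,v)}$ as $D_{\{u,v\}}$, declaring $E$ and $F$ accordingly. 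This is uniformly computable because the sequence $(\mathcal{C}_i)$ is uniformly computable. By property (1) of the $\mathcal{C}_i$, the resulting structure is isomorphic to $\Inv{n}{\mathcal{B}}$.

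For (2), use $\mathcal{N}\nleq_{n+1}\mathcal{M}$. By Karp's theorem, this gives a $\Sigma_{n+1}$ sentence $\varphi$ true in $\mathcal{M}$ and false in $\mathcal{N}$. Because $\mathcal{M}$ and $\mathcal{N}$ are computable, I expect $\varphi$ can be taken computable infinitary; I would check this in \cite{unfriendly}, and otherwise use it as a non-uniform parameter. Now take a $\mathbf{0}^{(n)}$-computable copy of $\Inv{n}{\mathcal{G}}$, where $\mathcal{G}=\mathcal{B}$.
\begin{itemize}
\item The set $U$ and the sets $C_{\{u,v\}}$, $D_{\{u,v\}}$ (given by $E$ and $F$) are $\mathbf{0}^{(n)}$-computable.
\item $u - v$ is an edge if and only if $C_{\{u,v\}}\models\varphi$. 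Relativizing $\varphi$ to this part makes this $\Sigma^0_{n+1}$ relative to $\mathbf{0}^{(n)}$, that is, $\Sigma^0_{2n+1}$.
\item $u - v$ is not an edge if and only if $D_{\{u,v\}}\models\varphi$, which is also $\Sigma^0_{2n+1}$.
\end{itemize}
Hence the edge relation is $\Delta^0_{2n+1}$, i.e.\ $\mathbf{0}^{(2n)}$-computable. Renumbering the $\mathbf{0}^{(n)}$-computable set $U$ as $\omega$ gives a $\mathbf{0}^{(2n)}$-computable copy of $\mathcal{B}$.

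For (3), let $\psi$ be a $\Pi_\ell$ Scott sentence of $\mathcal{B}$ with $\ell\geq 2$. The Scott sentence of $\Inv{n}{\mathcal{B}}$ will be the conjunction of two parts.
\begin{itemize}
\item \emph{Structural axioms.} $U$ is infinite, if $\mathcal{B}$ is. Every non-$U$ element lies in exactly one $E$- or $F$-fiber of exactly one pair of distinct $U$-elements. The fibers for $(u,v)$ and $(v,u)$ coincide, and $\mathcal{L}$-relations hold only inside a single fiber. For every pair of distinct $u,v\in U$, either the $E$-fiber satisfies the Scott sentence of $\mathcal{M}$ and the $F$-fiber that of $\mathcal{N}$, or vice versa. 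Both Scott sentences are $\Pi_{n+2}$ when relativized to a fiber, so all these axioms are $\Pi_{n+2}$.
\item \emph{Translation of $\psi$.} Relativize all quantifiers of $\psi$ to $U$. Replace each edge atom $u - v$ by the $\Sigma_{n+1}$ formula ``the $E$-fiber of $(u,v)$ satisfies $\varphi$.'' Replace each negated edge atom by ``the $F$-fiber of $(u,v)$ satisfies $\varphi$,'' which is also $\Sigma_{n+1}$ and is equivalent to the negation in the presence of the structural axioms.
\end{itemize}
An induction on complexity then shows the translation is $\Pi_{n+m}$ or $\Sigma_{n+m}$ whenever the original subformula is $\Pi_m$ or $\Sigma_m$, for $m\geq 2$. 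The base case is a $\forall\exists$ block over $\Sigma_{n+1}$ matrices: the inner $\exists$ keeps the formula $\Sigma_{n+1}$, and the outer $\forall$ makes it $\Pi_{n+2}$. The same base case handles $\exists\forall$ blocks and finitary boolean combinations of $\Sigma_{n+1}$ atoms.

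The step I expect to be the main obstacle is this level bookkeeping in (3), in particular the requirement $\ell\geq 2$. A bare $\Pi_1$ subformula over $\Sigma_{n+1}$ atoms jumps to $\Pi_{n+2}$ rather than $\Pi_{n+1}$. So the induction must start at level $2$, treating $\Sigma_1$ and $\Pi_1$ pieces only as parts of level-$2$ blocks. The remaining step, verifying that every countable model of the conjunction is isomorphic to $\Inv{n}{\mathcal{B}}$, is routine. The structural axioms make a model have the form $\Inv{n}{\mathcal{G}}$, where $\mathcal{G}$ is the graph on $U$ with edges read off via $\varphi$. The translated $\psi$ then forces $\mathcal{G}\cong\mathcal{B}$, and an isomorphism $\mathcal{G}\cong\mathcal{B}$ lifts fiberwise to an isomorphism of the inversions.
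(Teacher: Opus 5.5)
The paper quotes this theorem from \cite{unfriendly} without proof, so I am judging your argument on its own terms. Your proof of (1) is correct. Parts (2) and (3) each contain a genuine gap, although both can be repaired.

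In (2), $\varphi$ cannot be computable when $n\geq 1$. Suppose a computable $\Sigma_{n+1}$ sentence were true in $\mathcal{M}$ and false in $\mathcal{N}$. Since the $\mathcal{C}_i$ are uniformly computable, $S=\{i:\mathcal{C}_i\models\varphi\}$ would then be $\Sigma^0_{n+1}$, contradicting the $\Sigma^0_{2n+1}$-completeness of $S$. Your fallback of treating $\varphi$ as a non-uniform parameter does not help either. You evaluate $\varphi$ uniformly on infinitely many fibers, so a code for $\varphi$ enters the computation as an oracle. Your count $\Sigma^0_{n+1}(\mathbf{0}^{(n)})=\Sigma^0_{2n+1}$ therefore survives exactly when $\varphi$ is $\mathbf{0}^{(n)}$-computable, and producing such a sentence is the missing step. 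One way to do it is to run Karp's argument effectively. Show by induction on $k$ that for tuples in computable structures, $\bar u\not\le_k\bar v$ is witnessed by a $\mathbf{0}^{(k-1)}$-computable $\Pi_k$ formula true of $\bar u$ and false of $\bar v$. For the inductive step, suppose $\bar u\not\le_{k+1}\bar v$, and pick $\bar e$ with $\bar v\bar e\not\le_k\bar u\bar f$ for every $\bar f$. Use $\neg\exists\bar y\bigwedge_\rho\rho(\bar x,\bar y)$, where $\rho$ ranges over the $\Pi^0_{2k-1}$ set of $\mathbf{0}^{(k-1)}$-computable $\Pi_k$ formulas true of $\bar v\bar e$. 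As written, this conjunction is not $\mathbf{0}^{(k)}$-computable. It becomes so once you pad each conjunct $\rho$ with a disjunct: a $\mathbf{0}^{(k)}$-computable $\Sigma_{k-1}$ sentence expressing the $\Sigma^0_{2k-1}$ fact that $\rho$ fails of $\bar v\bar e$. Apply the same padding at the top level to $\exists\bar x\bigwedge_\rho\rho(\bar x)$. Here $\rho$ ranges over the $\mathbf{0}^{(n-1)}$-computable $\Pi_n$ formulas true of a tuple $\bar c\in\mathcal{M}$ such that $\bar c\not\le_n\bar d$ for every $\bar d\in\mathcal{N}$.

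In (3), the level bookkeeping is off by one. If both the edge atom and its negation are translated by $\Sigma_{n+1}$ formulas, then every $\Pi_1$ piece becomes $\Pi_{n+2}$. A $\Sigma_2$ subformula $\exists\bar y\forall\bar z\,\theta$ therefore becomes $\exists\forall\Sigma_{n+1}$, which is $\Sigma_{n+3}$. So your claim that the same base case handles $\exists\forall$ blocks is false. Starting the induction at level $2$ does not help, because $\Sigma_2$ formulas contain $\Pi_1$ pieces. As a result, a $\Pi_3$ Scott sentence is only translated to $\Pi_{n+4}$, and $\ell=3$ is exactly the case this paper uses. The fix is to use the complementary fiber. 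Modulo your structural axioms, exactly one fiber of $\{u,v\}$ satisfies $\varphi$. So the edge $u - v$ is also equivalent to the $\Pi_{n+1}$ formula saying that the $F$-fiber does not satisfy $\varphi$, and the non-edge is equivalent to the statement that the $E$-fiber does not satisfy $\varphi$. With atoms $\Delta_{n+1}$ in this sense, translate the quantifier-free matrices of $\Sigma_1$ formulas using the $\Sigma_{n+1}$ versions, and those of $\Pi_1$ formulas using the $\Pi_{n+1}$ versions. This gives $\Sigma_m\mapsto\Sigma_{n+m}$ and $\Pi_m\mapsto\Pi_{n+m}$ for all $m\geq 1$. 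The hypothesis $\ell\geq 2$ is then needed only to absorb the $\Pi_{n+2}$ structural axioms. In (3), $\varphi$ may be an arbitrary infinitary sentence, so the effectiveness problem from (2) does not arise.
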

	
	In particular, since $\mathcal{B}$ has Scott rank $2$; i.e., it has a $\Pi_3$ Scott sentence, $\mathcal{A} = \Inv{n}{\mathcal{B}}$ has a $\Pi_{n+3}$ Scott sentence and hence has Scott rank at most $n+2$.
	If $\mathcal{A} = \Inv{n}{\mathcal{B}}$ had a $0^{(n)}$-computable copy, then $\mathcal{B}$ would have a $0^{(2n)}$-computable copy; since $\mathcal{B}$ does not have a $0^{(2n)}$-computable copy, $\mathcal{A} =\Inv{n}{\mathcal{B}}$ does not have a $0^{(n)}$-computable copy.
	
	Finally, we have a $0^{(2n)}$-computable copy of $T_{\mathcal{B}}$ and must argue that there is a computable copy of $T_{\mathcal{A}} = T_{\Inv{n}{\mathcal{B}}}$. This is essentially the unfriendly analogue of Lemma 6.3 of \cite{HTMontalban}; intuitively, $T_{\Inv{n}{\mathcal{B}}}$ and $\Inv{n}{T_{\mathcal{B}}}$ are not too far apart, and (1) of Theorem \ref{thm:unfriendly} implies that $T_{\mathcal{A}}$ (which is morally $\Inv{n}{T_{\mathcal{B}}}$) has a computable copy. The following lemma makes this formal.
	
	\begin{lem}
		For any structure $\mathcal{B}$, if $T_{\mathcal{B}}$ has a $0^{(2n)}$-computable copy then $T_{\Inv{n}{\mathcal{B}}}$ has a computable copy.
	\end{lem}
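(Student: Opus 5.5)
The plan is to mimic the proof of Lemma 6.3 of \cite{HTMontalban}, with the friendly jump inversion replaced by the unfriendly one. We will use the uniformity of part (1) of Theorem~\ref{thm:unfriendly}, which we expect to be visible from its proof in \cite{unfriendly}: from a $\mathbf{0}^{(2n)}$-computable copy of a graph, or more generally of a structure in a relational language, one can uniformly build a computable copy of its unfriendly jump inversion. The key idea is to apply this not to $\mathcal{B}$ but to a structure $\mathcal{T}$ that is interdefinable with $T_\mathcal{B}$ and encodes enough information that $T_{\Inv{n}{\mathcal{B}}}$ can be recovered computably from $\Inv{n}{\mathcal{T}}$.

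\textbf{Step 1.} Analyze the tree $T_{\Inv{n}{\mathcal{B}}}$. A node of this tree is a tuple from $\Inv{n}{\mathcal{B}}$. Such a tuple splits into two parts: a tuple $\bar{u}$ of vertices (the elements satisfying $U$), and, for various pairs $\{u,v\}$, tuples of elements from $C_{\{u,v\}}$ and $D_{\{u,v\}}$. Here $u,v$ may or may not themselves be listed in $\bar{u}$. The atomic type of the node is determined by three pieces of data: the atomic type of $\bar{u}$ in $\mathcal{B}$ (edges only), the pattern recording which listed elements lie in which $C$ or $D$ box over which listed vertices, and the $\mathcal{L}$-atomic types inside each box. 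Whether a box is a copy of $\mathcal{M}$ or of $\mathcal{N}$ is decided by the edge relation between the two vertices of the pair, and this relation is read off $T_\mathcal{B}$ at the node extending $\bar{u}$ by $u,v$.

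\textbf{Step 2.} Build the intermediate structure. Let $\mathcal{T}$ be $T_\mathcal{B}$ expanded, or made multi-sorted, so that it can serve as the input of the unfriendly inversion. To each node $\sigma$ of $T_\mathcal{B}$ of level at least $2$ and each pair $i<j$ of its variables, attach a pair of boxes in the same way that $\Inv{n}{\cdot}$ attaches boxes to vertex pairs. The boxes are arranged in the order $(\mathcal{M},\mathcal{N})$ if $\sigma$ says $x_i - x_j$ and $(\mathcal{N},\mathcal{M})$ otherwise. So we replace the unary edge label of $\sigma$ by a pair of structures, using a variant of the definition of $\Inv{n}{\cdot}$ for unary predicates on the tree. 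The proof of Theorem~\ref{thm:unfriendly}(1) then gives a computable copy of this inverted labeled tree from the given $\mathbf{0}^{(2n)}$-computable copy of $T_\mathcal{B}$. What we need from that proof is the following: there is a single computable sequence of structures, each of which we can view as a ``box pair'', and the $i$-th pair is isomorphic to $(\mathcal{M},\mathcal{N})$ or to $(\mathcal{N},\mathcal{M})$ according to a $\Delta^0_{2n+1}$ fact. This is exactly the choice of the sequence $\mathcal{C}_i$ together with $S$ being $\Sigma^0_{2n+1}$-complete.

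\textbf{Step 3.} Assemble $T_{\Inv{n}{\mathcal{B}}}$ computably. The tree is built level by level. A node is a node $\sigma$ of the (computable) skeleton of the copy of $T_\mathcal{B}$, giving the vertex part, together with finitely many elements chosen from box structures. Box structures are attached both to pairs of vertices listed in $\sigma$ and to ``fresh'' vertex pairs, where one or both vertices are unlisted. Fresh vertices are handled by passing to an extension $\sigma'$ of $\sigma$ that lists them, keeping those vertices hidden, and reading the box from $\sigma'$. Since every non-empty tuple occurs infinitely often in $T_\mathcal{B}$, we can computably arrange that each node has infinitely many children of each type, in particular each box-element type.

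\textbf{Verification.} We verify that the result is isomorphic to $T_{\Inv{n}{\mathcal{B}}}$ by a back-and-forth argument on nodes. The main obstacle is consistency. The same pair $\{u,v\}$ is represented at many nodes of $T_\mathcal{B}$, and the box chosen for it must be the same across all extensions. Otherwise the $\mathcal{L}$-types inside a box would be incoherent along a path. To handle this, each box will be indexed by a node of $T_\mathcal{B}$ together with a pair of variables, and every extension will inherit that box. Since isomorphism type is all that matters, and every branch of $T_\mathcal{B}$ codes a copy of a substructure of $\mathcal{B}$, the resulting tree matches $T_{\Inv{n}{\mathcal{B}}}$.
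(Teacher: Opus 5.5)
Your construction is essentially the paper's. Boxes attached to a node and a pair of its variables, and inherited by all extensions, are the paper's box pairs $(\mathcal{C}_{\sigma,\tau},\mathcal{D}_{\sigma,\tau})$ indexed by comparable nodes (the nodes at levels $i$ and $j$ of the path). These are obtained by $m$-reducing the $\Delta^0_{2n+1}$ edge relation and its complement to $S$. Likewise, your skeleton node with hidden vertices plus box elements is the paper's tuple of codes $\langle\sigma,e\rangle$, $\langle\sigma,\tau;p,r;e\rangle$ whose $T_{\mathcal{B}}$-nodes all lie on one path; the unlisted nodes below the top one are your hidden vertices. The detour through the proof of part (1) of Theorem~\ref{thm:unfriendly} is unnecessary, since only that $m$-reduction is used.

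There is one concrete error, and it sits exactly where computability is decided. In Step 1 you count ``the atomic type of $\bar u$ in $\mathcal{B}$ (edges only)'' as part of a node's label. It is not: $\Inv{n}{\mathcal{B}}$ is an $\mathcal{L}^*$-structure with $\mathcal{L}^*=\mathcal{L}\cup\{U,E,F\}$, and there is no edge symbol. If your assembled tree labeled nodes with edges among the listed vertices, computing those labels would require the $\mathbf{0}^{(2n)}$-computable labels of $T_{\mathcal{B}}$. The tree would then be neither computable nor a copy of $T_{\Inv{n}{\mathcal{B}}}$.

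Step 3 never says how labels are computed, yet this is precisely where computability, as opposed to $\mathbf{0}^{(2n)}$-computability, is at stake. You need to state it as the paper does:
\begin{itemize}
\item $U$ holds exactly of vertex codes.
\item $E(b_i,b_j,b_k)$ holds iff $b_i,b_j$ are vertex codes for $\sigma,\tau$ and $b_k$ codes an element of the first box of $\{\sigma,\tau\}$; similarly $F$ for the second box.
\item The $\mathcal{L}$-relations among elements of a common box are read off the computable structure $\mathcal{C}_{\sigma,\tau}$ or $\mathcal{D}_{\sigma,\tau}$.
\end{itemize}
All edge information then lives only in which of $\mathcal{M},\mathcal{N}$ each box is isomorphic to, and that is never computed.

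Two smaller points. First, your ``computable skeleton'' presupposes that you first use $\mathbf{0}^{(2n)}$ to re-present $T_{\mathcal{B}}$ on $\omega^{<\omega}$, so that only the labels are noncomputable (the paper also assumes this silently). Second, the verification should not rest on branches of $T_{\mathcal{B}}$ coding substructures. It should rest on the invariant that each node of your tree codes an injective tuple of $\Inv{n}{\mathcal{B}}$, via fixed isomorphisms of each box onto the corresponding $C$ or $D$ set. You also need infinitely many children coding each element outside that tuple; a dummy index $e$ on every code supplies this multiplicity. Children can then be matched level by level.
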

	\begin{proof}
		Assume for simplicity that $\mathcal{B}$ is a graph. Construct computable sequences of structures $\mathcal{C}_{\sigma,\tau}$ and $\mathcal{D}_{\sigma,\tau}$, where $\sigma, \tau \in T_{\mathcal{B}}$ are compatible nodes, such that $(\mathcal{C}_{\sigma,\tau},\mathcal{D}_{\sigma,\tau}) \cong (\mathcal{M},\mathcal{N})$ if in $T_{\mathcal{B}}$ there is an edge between (the elements of $\mathcal{B}$ represented by the last entries of) $\sigma$ and $\tau$, and $(\mathcal{C}_{\sigma,\tau},\mathcal{D}_{\sigma,\tau}) \cong (\mathcal{N},\mathcal{M})$ otherwise. In this and what follows, we treat $\sigma,\tau$ as an unordered pair.
		
		The underlying tree of our computable copy of $T_{\Inv{n}{\mathcal{B}}}$ will be the tree of tuples $\langle b_1,\ldots,b_\ell \rangle$ where:
		\begin{enumerate}
			\item each $b_i$ is either of the form $\langle \sigma,e \rangle$ where $\sigma$ is a non-root node in $T_{\mathcal{B}}$ and $e \in \omega$, or is of the form $\langle \sigma,\tau; p, r ; e \rangle$ for $\sigma,\tau$ distinct compatible nodes in $T_{\mathcal{B}}$, $p \in \{0,1\}$, $e \in \omega$, and $r \in \mathcal{C}_{\sigma,\tau}$ if $p = 0$ and $r \in \mathcal{D}_{\sigma,\tau}$ if $p = 1$;
			\item all of the nodes in $T_{\mathcal{B}}$ appearing in $b_1,\ldots,b_\ell$, including the nodes $\sigma,\tau$ when $b_i = \langle \sigma,\tau; p, r ; e \rangle$, are from the same path in $T_{\mathcal{B}}$;
			\item among $b_1,\ldots,b_\ell$ there is no repetition of the form $\langle \sigma,e \rangle$ and $\langle \sigma,e' \rangle$ or $\langle \sigma,\tau; p, r ; e \rangle$ and $\langle \sigma,\tau; p, r ; e' \rangle$.
		\end{enumerate}
		The $b_i$ of the form $\langle \sigma,e \rangle$ represent elements of $\mathcal{B}$, and nodes $\langle \sigma,\tau; p, r ; e \rangle$ represent the $r$th element of one of the structures $\mathcal{C}_{\sigma,\tau}$ or $\mathcal{D}_{\sigma,\tau}$ attached to (the elements of $\mathcal{B}$ represented by the last entries of) $\sigma$ and $\tau$. Essentially, we have ``mixed in'' the elements of the $\mathcal{C}_{\sigma,\tau}$ and $\mathcal{D}_{\sigma,\tau}$. The $e$ are simply to make sure that everything appears infinitely many times as in the definition of the tree of tuples.
		
		Now, given a node $\langle b_1,\ldots,b_\ell \rangle$ of the proposed copy of $T_{\Inv{n}{\mathcal{B}}}$, we must computably determine the atomic diagram in the language $\mathcal{L}^* = \mathcal{L} \cup \{ U,E,F\}$ of the elements of ${\Inv{n}{\mathcal{B}}}$ they represent. This is easy to do. Given $b_{i_1},\ldots,b_{i_m}$ with $b_{i_t}$ of the form $\langle \sigma,\tau; p, r_t ; e_t \rangle$ (for fixed $\sigma,\tau,p$) we put the relations of $\mathcal{L}$ on them according to how they are on $r_1,\ldots,r_m$ in $\mathcal{C}_{\sigma,\tau}$ (if $p = 0$) or $\mathcal{D}_{\sigma,\tau}$ (if $p = 1$). We put $U$ only on those $b_i$ which are of the form $\langle \sigma, e \rangle$, representing an element of $\mathcal{B}$. And we put $E(b_i,b_j,b_k)$ only when $b_i = \langle \sigma, e \rangle$ and $b_j = \langle \tau, f \rangle$ are nodes in $T_{\mathcal{B}}$ and $b_k = \langle \sigma,\tau; 0,r ; g \rangle$ and $F(b_i,b_j,b_k)$ only when $b_i = \langle \sigma , e \rangle$ and $b_j = \langle \tau , f \rangle$ are nodes in $T_{\mathcal{B}}$ and $b_k = \langle \sigma,\tau;1,r; g  \rangle$.
	\end{proof}
	
	It follows that for all finite $n\geq 1$, there is no $\bfDelta^0_n$ operator that, for all $\mathcal{A}$ of Scott rank at most $\omega$, takes copies of $T_\mathcal{A}$ to copies of $\mathcal{A}$. Hence, the first case where we do not have a proof of sharpness is $\omega+1$. There is an effective $\Delta^0_{\omega+1}$ operator that, for all $\mathcal{A}$ of Scott rank at most $\omega+1$, takes copies of $T_\mathcal{A}$ to copies of $\mathcal{A}$, and we do not know whether there is a $\bfDelta^0_\omega$ operator. 
	
	\begin{question}
		Given $\alpha\geq\omega+1$, does there exist $\beta < \alpha$ and a $\bfDelta^0_\beta$ operator that takes copies of $T_\mathcal{A}$ to copies of $\mathcal{A}$ for all $\mathcal{A}$ with Scott rank at most $\alpha$?
	\end{question}
	
	Andrews, Gonzalez, and Zhu \cite{belligerent} have extended unfriendly jump inversions through the infinite ordinals. However, this is not sufficient for our purposes. The issue, as discussed in that paper, stems from the fact that for infinite $\alpha$, $2 \cdot \alpha \neq \alpha \cdot 2$.
	
	\bibliographystyle{alpha}
	\bibliography{Reference}

@article{Lavrov,
	author = {I. A. Lavrov},
	journal = {Algebra and Logic},
	pages = {5-18},
	title = {Effective inseparability of the set of identically true formulae and finitely refutable formulae for certain elementary theories},
	volume = {2},
	year = {1963}}

@unpublished{HTKim,
	author = {Matthew Harrison-Trainor and J. Thomas Kim},
	note = {Preprint available at arxiv.org/pdf/2602.07166},
	title = {Scott spectral gaps for trees are bounded}}

@incollection{Karp,
	author = {Karp, Carol R.},
	booktitle = {Theory of {M}odels ({P}roc. 1963 {I}nternat. {S}ympos. {B}erkeley)},
	mrclass = {02.35},
	mrnumber = {209132},
	mrreviewer = {J.\ E.\ Fenstad},
	pages = {407--412},
	publisher = {North-Holland, Amsterdam},
	title = {Finite-quantifier equivalence},
	year = {1965}}

@article{MR,
	author = {A. Montalb\'an and D. Rossegger},
	journal = {The Journal of Symbolic Logic},
	pages = {1-17},
	title = {The Structural Complexity of Models of Arithmetic},
	volume = {First View},
	year = {2023}}

@article{GLRS,
	author = {D. Gonzalez and M. Lelyk and D. Rossegger and P. Szlufik},
	journal = {Submitted for publication. https://arxiv.org/abs/2507.12025},
	title = {Classifying the Complexities of Models of Arithmetic}}

@article{CamerloGao,
	author = {Riccardo Camerlo and Su Gao},
	issn = {00029947, 10886850},
	journal = {Transactions of the American Mathematical Society},
	number = {2},
	pages = {491--518},
	publisher = {American Mathematical Society},
	title = {The Completeness of the Isomorphism Relation for Countable {B}oolean Algebras},
	url = {http://www.jstor.org/stable/221773},
	urldate = {2025-04-17},
	volume = {353},
	year = {2001}}

@article{FriedmanStanley,
	author = {Friedman, Harvey and Stanley, Lee},
	doi = {10.2307/2274750},
	fjournal = {The Journal of Symbolic Logic},
	issn = {0022-4812,1943-5886},
	journal = {J. Symbolic Logic},
	mrclass = {03C15 (03C75 03E15)},
	mrnumber = {1011177},
	number = {3},
	pages = {894--914},
	title = {A {B}orel reducibility theory for classes of countable structures},
	url = {https://doi.org/10.2307/2274750},
	volume = {54},
	year = {1989}}

@article{GonzalezRossegger,
	author = {Gonzalez, David and Rossegger, Dino},
	doi = {10.1017/jsl.2024.59},
	fjournal = {The Journal of Symbolic Logic},
	issn = {0022-4812,1943-5886},
	journal = {J. Symb. Log.},
	mrclass = {03C57 (03C70 03D45 03E15)},
	mrnumber = {5027361},
	number = {4},
	pages = {1563--1592},
	title = {Scott sentence complexities of linear orderings},
	url = {https://doi.org/10.1017/jsl.2024.59},
	volume = {90},
	year = {2025}}

@article{Gao,
	author = {Gao, Su},
	doi = {10.2307/2695052},
	fjournal = {The Journal of Symbolic Logic},
	issn = {0022-4812,1943-5886},
	journal = {J. Symbolic Logic},
	mrclass = {03C15 (03C64 03C75 03E15)},
	mrnumber = {1833486},
	mrreviewer = {Andreas\ Blass},
	number = {2},
	pages = {902--922},
	title = {Some dichotomy theorems for isomorphism relations of countable models},
	url = {https://doi.org/10.2307/2695052},
	volume = {66},
	year = {2001}}

@unpublished{errata,
	author = {Harrison-Trainor, Matthew and Montalb\'an, Antonio},
	note = {\url{https://homepages.math.uic.edu/~mht/papers/errata-tree-tuples.pdf}},
	title = {Errata to: The tree of tuples of a structure},
	year = {2026}}

@article{HTMontalban,
	author = {Harrison-Trainor, Matthew and Montalb\'an, Antonio},
	doi = {10.1017/jsl.2019.92},
	fjournal = {The Journal of Symbolic Logic},
	issn = {0022-4812,1943-5886},
	journal = {J. Symb. Log.},
	mrclass = {03D45 (03C57)},
	mrnumber = {4404618},
	mrreviewer = {Ivan\ V.\ Latkin},
	number = {1},
	pages = {21--46},
	title = {The tree of tuples of a structure},
	url = {https://doi.org/10.1017/jsl.2019.92},
	volume = {87},
	year = {2022}}

@article{Lauchli,
	author = {L\"auchli, H. and Leonard, J.},
	doi = {10.4064/fm-59-1-109-116},
	fjournal = {Polska Akademia Nauk. Fundamenta Mathematicae},
	issn = {0016-2736,1730-6329},
	journal = {Fund. Math.},
	mrclass = {02.54},
	mrnumber = {199108},
	mrreviewer = {E.\ Engeler},
	pages = {109--116},
	title = {On the elementary theory of linear order},
	url = {https://doi.org/10.4064/fm-59-1-109-116},
	volume = {59},
	year = {1966}}

@article{Ehrenfeucht,
	author = {Ehrenfeucht, A.},
	doi = {10.4064/fm-49-2-129-141},
	fjournal = {Polska Akademia Nauk. Fundamenta Mathematicae},
	issn = {0016-2736,1730-6329},
	journal = {Fund. Math.},
	mrclass = {02.50},
	mrnumber = {126370},
	mrreviewer = {G.\ F.\ Rose},
	pages = {129--141},
	title = {An application of games to the completeness problem for formalized theories},
	url = {https://doi.org/10.4064/fm-49-2-129-141},
	volume = {49},
	year = {1960/61}}

@book{AK00,
	author = {Ash, C.J. and Knight, J.},
	publisher = {Elsevier Science},
	title = {Computable Structures and the Hyperarithmetical Hierarchy},
	year = {2000}}

@article{robuster,
	author = {Montalb\'an, Antonio},
	doi = {10.1090/proc/12669},
	fjournal = {Proceedings of the American Mathematical Society},
	issn = {0002-9939,1088-6826},
	journal = {Proc. Amer. Math. Soc.},
	mrclass = {03D45 (03C57 03C75 03D60)},
	mrnumber = {3411157},
	mrreviewer = {Rodney\ G.\ Downey},
	number = {12},
	pages = {5427--5436},
	title = {A robuster {S}cott rank},
	url = {https://doi.org/10.1090/proc/12669},
	volume = {143},
	year = {2015}}

@unpublished{ChenGonzalezHT,
	author = {Ruiyan Chen and David Gonzalez and Matthew Harrison-Trainor},
	note = {Preprint, arXiv:2505.00893},
	title = {Optimal syntactic definitions of back-and-forth types}}

@incollection{Scott,
	author = {Scott, Dana},
	booktitle = {Theory of {M}odels ({P}roc. 1963 {I}nternat. {S}ympos. {B}erkeley)},
	mrclass = {02.35},
	mrnumber = {200133},
	mrreviewer = {E.\ Engeler},
	pages = {329--341},
	publisher = {North-Holland, Amsterdam},
	title = {Logic with denumerably long formulas and finite strings of quantifiers},
	year = {1965}}

@unpublished{GonzalezKnight,
	author = {David Gonzalez and Julia F. Knight},
	note = {Preprint, arXiv: 2605.04404},
	title = {Computable {S}cott sentences and the {F}riedman-{S}tanley embedding of graphs in labeled trees}}

@unpublished{unfriendly,
	author = {Matthew Harrison-Trainor},
	note = {Preprint, arXiv: 2505.23613},
	title = {Unfriendly jump inversions}}

@unpublished{belligerent,
	author = {Uri Andrews and David Gonzalez and Hongyu Zhu},
	note = {Preprint, arXiv:2607.10935},
	title = {Infinite Belligerent Jump Inversion and Computable Scott Analysis}}
	
\end{document}